\newtheorem{theorem}{Theorem}[section]
\newtheorem{lemma}[theorem]{Lemma}
\theoremstyle{definition}
\newtheorem{remark}[theorem]{Remark}
\numberwithin{equation}{section}
\begin{document}

\title[The denominators of convergents for continued fractions]{The denominators of convergents for continued fractions}

\author {Lulu Fang, Min Wu and Bing Li$^{*}$}
\address{School of Mathematics, South China University of Technology, Guangzhou 510640, P.R. China}
\email{f.lulu@mail.scut.edu.cn, wumin@scut.edu.cn and scbingli@scut.edu.cn}


\thanks {* Corresponding author}
\subjclass[2010]{Primary 11K50; Secondary 60F10, 60F15}
\keywords{Continued fractions, Exponential decay, Strong limit theorems.}

\begin{abstract}
For any real number $x \in [0,1)$, we denote by $q_n(x)$ the denominator of the $n$-th convergent of the continued fraction expansion of $x$ $(n \in \mathbb{N})$.
It is well-known that the Lebesgue measure of the set of points $x \in [0,1)$ for which $\log q_n(x)/n$ deviates away from $\pi^2/(12\log2)$ decays to zero as $n$ tends to infinity.
In this paper, we study the rate of this decay by giving an upper bound and a lower bound. What is interesting is that the upper bound is closely related to the Hausdorff dimensions of the level sets for $\log q_n(x)/n$.
As a consequence, we obtain a large deviation type result for $\log q_n(x)/n$, which indicates that the rate of this decay is exponential.
\end{abstract}

\maketitle

\section{Introduction}
Let $T: [0,1) \longrightarrow [0,1)$ be the \emph{continued fraction transformation} defined as
\begin{equation*}
T(0):=0\ \ \  \text{and}\ \ \ T(x):= 1/x- [1/x]\ \ \  \text{if}\ \ \  x \in (0,1).
\end{equation*}
where $[x]$ denotes the greatest integer not exceeding $x$.
Then every real number $x \in [0,1)$ can be uniquely written as
\begin{equation}\label{continued fraction expansion}
x = \dfrac{1}{a_1(x) +\dfrac{1}{a_2(x) + \ddots +\dfrac{1}{a_n(x)+ \ddots}}},
\end{equation}
where $a_1(x) = [1/x]$ and $a_{n +1}(x) = a_1(T^nx)$ for all $n \geq 1$. The representation (\ref{continued fraction expansion}) is said to be the \emph{continued fraction expansion} of $x$ and $a_n(x), n \geq 1$  are called the \emph{partial quotients} of the continued fraction expansion of $x$.  Sometimes we write the form (\ref{continued fraction expansion}) as $[a_1(x), a_2(x), \cdots, a_n(x), \cdots]$. For any $n \geq 1$, we denote by $\frac{p_n(x)}{q_n(x)}:= [a_1(x), a_2(x), \cdots, a_n(x)]$ the $n$-th \emph{convergent} of the continued fraction expansion of $x$, where $p_n(x)$ and $q_n(x)$ are relatively prime. With the conventions $p_{-1}=1$, $q_{-1}=0$, $p_0=0$, $q_0=1$, the quantities $p_n$ and $q_n$ satisfy the following recursive formula:
\begin{equation}\label{recursive}
p_n(x) = a_n(x) p_{n-1}(x) + p_{n-2}(x)\ \ \ \text{and}\ \ \ q_n(x) = a_n(x) q_{n-1}(x) + q_{n-2}(x).
\end{equation}
It is easy to see that these convergents are rational numbers and $p_n(x)/q_n(x) \rightarrow x$ as $n \rightarrow \infty$ for all $x \in [0,1)$. More precisely,
\begin{equation}\label{diophantine}
\frac{1}{2q_{n+1}^2(x)} \leq \frac{1}{2q_n(x)q_{n+1}(x)} \leq \left|x-\frac{p_n(x)}{q_n(x)}\right| \leq \frac{1}{q_n(x)q_{n+1}(x)} \leq \frac{1}{q_n^2(x)}.
\end{equation}
This is to say that the speed of $p_n(x)/q_n(x)$ approximating to $x$ is dominated by $q_n^{-2}(x)$. So the denominator of the $n$-th convergent $q_n(x)$ plays an important role in the problem of Diophantine approximation. For more details about continued fractions, we refer the reader to two monographs of Iosifescu and Kraaikamp \cite{lesIK02} and Khintchine \cite{lesKhi64}.

For an irrational number $x \in [0,1)$, we denote
\[
\beta_*(x) = \liminf\limits_{n \to \infty}\frac{\log q_n(x)}{n}\ \ \ \ \text{and}\ \ \ \
\beta^*(x) = \limsup\limits_{n \to \infty}\frac{\log q_n(x)}{n}.
\]
The functions $\beta_*(x)$ and $\beta^*(x)$ are called the \emph{lower} and \emph{upper L\'{e}vy constant} of $x$ respectively. If $\beta_*(x) = \beta^*(x)$, we say that $x$ has a L\'{e}vy constant and denote the common value by $\beta(x)$. It is not difficult to check that $\beta_*(x) \geq \gamma_0:=\log((\sqrt{5}+1)/2)$ for all irrational number $x$. On the one hand, Faivre \cite{lesFai92} showed that every quadratic irrational has a L\'{e}vy constant. In 2006, Wu \cite{lesWu06} proved that the set of L\'{e}vy constants of quadratic irrationals is dense in the interval $[\gamma_0,+\infty)$.
On the other hand, Faivre \cite{lesFai97} showed that for any $\gamma \geq \gamma_0$, there exists an irrational number $x$ such that $x$ has L\'{e}vy constant $\lambda$. Recently, Baxa \cite{lesBaxa09} improved this result for transcendental numbers. That is to say, there exists a transcendental number $x$ such that $\beta(x)= \gamma$ for any $\gamma \geq \gamma_0$.
Also, Baxa \cite{lesBaxa99} obtained that for any two real numbers satisfying $\gamma_0\leq \gamma_1 \leq \gamma_2 < +\infty$, there exist non-denumerably many pairwise not equivalent irrational numbers $x$ such that $\beta_*(x) = \gamma_1$ and $\beta^*(x) = \gamma_2$. Furthermore, Wu \cite{lesWu06a} considered the Hausdorff dimension of the set of such points and gave it a lower bound.
A basic result about L\'{e}vy constant is due to L\'{e}vy \cite{lesLevy29}, who proved that the function $\beta(x)$ is constantly $\pi^2/(12\log2)$ for $\lambda$-almost all $x \in [0,1)$. Here $\lambda$ denotes the Lebesgue measure on $[0,1)$.

\begin{theorem}[\cite{lesLevy29}]\label{Levy theorem}
For $\lambda$-almost all $x \in [0,1)$,
\begin{equation*}
\lim\limits_{n \to \infty}\frac{\log q_n(x)}{n} = \frac{\pi^2}{12\log2}.
\end{equation*}
\end{theorem}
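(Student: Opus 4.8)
The plan is to rewrite $\log q_n(x)$ as a Birkhoff sum for the Gauss map $T$ and then apply the ergodic theorem.

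\emph{Step 1 (an exact product formula).} First I would record the identity $\abs{q_n(x)x-p_n(x)}=\prod_{k=0}^{n}T^kx$ for every irrational $x$. This follows by setting $\zeta_{n+1}:=1/T^nx$, using $x=[a_1(x),\dots,a_n(x),\zeta_{n+1}]$ together with the same induction on (\ref{recursive}) that yields the convergents to get
\[
x=\frac{\zeta_{n+1}\,p_n(x)+p_{n-1}(x)}{\zeta_{n+1}\,q_n(x)+q_{n-1}(x)}\qquad(n\ge1),
\]
solving for $\zeta_{n+1}$, noting that $q_n(x)x-p_n(x)$ has sign $(-1)^n$ to obtain $\abs{q_n(x)x-p_n(x)}=\abs{q_{n-1}(x)x-p_{n-1}(x)}/\zeta_{n+1}$, and iterating down to $n=0$, where $\abs{q_0x-p_0}=x=1/\zeta_1$. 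Multiplying the two middle inequalities of (\ref{diophantine}) by $q_n(x)$ gives $\tfrac1{2q_{n+1}(x)}\le\abs{q_n(x)x-p_n(x)}\le\tfrac1{q_{n+1}(x)}$; taking $-\log$ then yields
\[
\log q_{n+1}(x)=\sum_{k=0}^{n}\bigl(-\log T^kx\bigr)-\theta_n,\qquad \theta_n\in[0,\log2].
\]
So it suffices to prove $\frac1n\sum_{k=0}^{n-1}\bigl(-\log T^kx\bigr)\to\pi^2/(12\log2)$ for Lebesgue-a.e.\ $x$.

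\emph{Step 2 (the ergodic theorem).} Then I would invoke the classical facts that $T$ preserves the Gauss measure $d\mu=\frac1{\log2}\,\frac{dx}{1+x}$ and that $(T,\mu)$ is ergodic (see \cite{lesIK02} or \cite{lesKhi64}). Since $f(x):=-\log x$ is in $L^1(\mu)$ --- the singularity at $0$ is harmless because $\int_0^1(-\log x)\,dx<\infty$, while the density is bounded on $[0,1]$ --- Birkhoff's ergodic theorem gives, for $\mu$-a.e.\ $x$,
\[
\lim_{n\to\infty}\frac1n\sum_{k=0}^{n-1}\bigl(-\log T^kx\bigr)=\int_0^1(-\log x)\,d\mu(x)=\frac1{\log2}\int_0^1\frac{-\log x}{1+x}\,dx .
\]
Because $\mu$ and $\lambda$ are mutually absolutely continuous, the exceptional set is also $\lambda$-null, so the convergence holds $\lambda$-a.e.; combined with Step 1 (and the harmless shift between the indices $n$ and $n+1$) this shows that $\log q_n(x)/n$ converges $\lambda$-a.e.\ to this integral.

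\emph{Step 3 (evaluating the constant).} Finally, expanding $\frac1{1+x}=\sum_{m\ge0}(-1)^mx^m$ and using $\int_0^1(-\log x)\,x^m\,dx=(m+1)^{-2}$,
\[
\int_0^1\frac{-\log x}{1+x}\,dx=\sum_{m=1}^{\infty}\frac{(-1)^{m-1}}{m^2}=\frac{\pi^2}{12},
\]
so the almost-everywhere limit equals $\frac1{\log2}\cdot\frac{\pi^2}{12}=\frac{\pi^2}{12\log2}$, which is the assertion of the theorem. I expect the only real obstacle to be the ergodicity of $(T,\mu)$: everything else is elementary bookkeeping. A self-contained treatment of that point would require either the Gauss--Kuzmin--Wirsing analysis of the transfer operator of $T$ or a Knopp-type density argument showing that every $T$-invariant Borel set is $\mu$-trivial. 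The one technical subtlety elsewhere is the integrability of $-\log x$ at the origin, which is precisely what legitimizes applying Birkhoff's theorem to this unbounded observable.
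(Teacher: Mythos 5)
The paper does not prove this statement at all: Theorem \ref{Levy theorem} is quoted as a classical result of L\'{e}vy \cite{lesLevy29} and used as a black box, so there is no in-paper argument to compare against. Your proof is the standard ergodic-theoretic one, and it is correct. The product identity $\abs{q_n(x)x-p_n(x)}=\prod_{k=0}^{n}T^kx$ is right (your iteration $\abs{q_nx-p_n}=\abs{q_{n-1}x-p_{n-1}}\cdot T^nx$ down to $\abs{q_0x-p_0}=x$ checks out), and multiplying the middle inequalities of (\ref{diophantine}) by $q_n(x)$ does give $\log q_{n+1}(x)=\sum_{k=0}^{n}(-\log T^kx)-\theta_n$ with $\theta_n\in[0,\log 2]$, which reduces the theorem to a Birkhoff average. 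Since $-\log x\ge 0$ lies in $L^1(\mu)$ and $\mu$ and $\lambda$ are mutually absolutely continuous (the Gauss density is pinched between $\tfrac{1}{2\log 2}$ and $\tfrac{1}{\log 2}$), the passage from $\mu$-a.e.\ to $\lambda$-a.e.\ convergence and the index shift from $n+1$ to $n$ are indeed harmless, and the evaluation $\int_0^1\frac{-\log x}{1+x}\,dx=\sum_{m\ge1}\frac{(-1)^{m-1}}{m^2}=\frac{\pi^2}{12}$ is correct (the termwise integration deserves one line of justification, e.g.\ bracketing by the alternating partial sums, but this is routine). The only ingredient you import without proof is the invariance and ergodicity of the Gauss measure, which you flag explicitly and which is exactly the standard citation; this is the same level of reliance on the literature as the paper itself, which cites the whole theorem. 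In short: your argument supplies a proof where the paper offers only a reference, and it is the expected one.
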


From the fractal dimension points of view, Barreira and Schmeling \cite{lesBS00} pointed out that the set of points $x \in [0,1)$ for which the limit in Theorem \ref{Levy theorem} does not exist (i.e., $\beta_*(x)<\beta^*(x)$) has full Hausdorff dimension.
Furthermore, Pollicott and Weiss \cite{lesPW99} first considered the multifractal analysis of $\beta(x)$ and proved that the spectral function
\[
\tau(\gamma):= \dim_{\rm H} \left\{x \in [0,1): \beta(x) = \gamma \right\} = \frac{\inf_{\theta \in \mathbb{R}}\{\theta\cdot 2\gamma + \mathrm{P}(\theta)\}}{2\gamma}
\]
for any $\gamma \geq \gamma_0$ (see also Fan et al.~\cite{lesF.L.W.W} and Kesseb\"{o}hmer and Stratmann \cite{lesKS07}),
where $\dim_{\rm H}$ denotes the Hausdorff dimension and $\mathrm{P}(\cdot)$ is called the \emph{Diophantine pressure function} given by
\[
\mathrm{P}(\theta) = \lim_{n \to \infty} \frac{1}{n} \log \sum_{a_1,\cdots,a_n} q_n^{-2\theta}([a_1,\cdots,a_n])\ \ \text{for any}\  \theta > 1/2.
\]
It is worth remarking that the spectral function $\tau(\cdot)$ is real-analytic on the interval $(\gamma_0,+\infty)$ satisfying $\tau(\gamma)$ goes to $1/2$ as $\gamma$ tens to infinity, it is increasing on the interval $[\gamma_0, \pi^2/(12\log2)]$ and decreasing on the interval $[\pi^2/(12\log2), +\infty)$, and it also has a unique maximum value equal to 1 at point $\pi^2/(12\log2)$; the Diophantine pressure function $\mathrm{P}(\cdot)$ has a singularity at $1/2$ and is decreasing, convex and real-analytic on $(1/2,+\infty)$ satisfying
\begin{equation}\label{ele}
\mathrm{P}(1)=0\ \ \ \ \ \ \ \ \ \text{and}\ \ \ \ \ \ \ \ \ \mathrm{P}^\prime(1)=-\pi^2/(6\log 2).
\end{equation}
More detailed analyses of $\tau(\cdot)$ and $\mathrm{P}(\cdot)$ can be founded in Fan et al.~\cite{lesF.L.W.W}, Kesseb\"{o}hmer and Stratmann \cite{lesKS07}, Mayer \cite{lesMay90} and Pollicott and Weiss \cite{lesPW99}. From the metrical points of view, some limit theorems about $q_n(x)$ have been extensively investigated.
For instance, Ibragimov \cite{lesIbr61} proved that the distribution of the $\log q_n(x)$, suitably normalized, converges to the normal distribution with mean 0 and unit variance, that is, the central limit theorem for $q_n(x)$. Furthermore, Morita \cite{lesMor94} showed that the Berry-Esseen bound for the above central limit theorem is as we would expect $\mathcal{O}(n^{-1/2})$. Later, Philipp and Stackelberg \cite{lesPS69} obtained the classical law of the iterated logarithm for $q_n(x)$ (see also Gordin and Reznik \cite{lesGR70}).

It is worth noting that these classical limit theorems basically concern that the averages taken over large samples converge to expectation values in some sense, but say little or nothing about the rate of convergence.
It follows from Theorem \ref{Levy theorem} that the Lebesgue measure of the set of points $x$ for which $\log q_n(x)/n$ deviates away from $\pi^2/(12\log2)$ decays to zero as $n$ goes to infinity.
A natural question is arisen: what are the rates of these decreasing probabilities?
In fact,
Fang et al.~\cite{lesFWSL15} have considered these decays and showed that the upper bounds of these decays are exponential.
In this paper, we not only obtain the upper and lower bounds of these decreasing probabilities, but also give them explicit formulae. And an interesting phenomenon is that the explicit formula of the upper bound is closely related to the spectral function $\tau(\cdot)$ (see Remarks \ref{1} and \ref{2} below).

\section{Main results}
In this section, we will state our main results. For simplicity, we use the notation $b$ to denote the constant $\pi^2/(12\log2)$.

\begin{theorem}\label{dayu}
For any $\varepsilon >0$, we have
\begin{equation*}\
\limsup_{n \to \infty} \frac{1}{n}\log \lambda\left\{x\in [0,1):\frac{\log q_n(x)}{n} \geq b + \varepsilon\right\} \leq \theta_1(\varepsilon)
\end{equation*}
and
\begin{equation*}
\liminf_{n \to \infty} \frac{1}{n}\log \lambda\left\{x\in [0,1):\frac{\log q_n(x)}{n} \geq b + \varepsilon\right\} \geq -2\log b_\varepsilon - \log 3
\end{equation*}
where $\theta_1(\varepsilon) = \inf\limits_{0< t < 1} \big\{-t(b + \varepsilon) + \mathrm{P}(1-t/2)\big\}< 0$ and $b_\varepsilon$ denotes the smallest integer no less than $e^{b+\varepsilon}$.
\end{theorem}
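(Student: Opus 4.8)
The plan is to establish the two bounds separately, relying throughout only on the elementary estimate relating the Lebesgue measure of a rank-$n$ cylinder $I_n(a_1,\dots,a_n):=\{x\in[0,1):a_1(x)=a_1,\dots,a_n(x)=a_n\}$ to its denominator $q_n([a_1,\dots,a_n])$, namely $\tfrac12 q_n^{-2}\le\lambda(I_n(a_1,\dots,a_n))\le q_n^{-2}$, a consequence of \eqref{diophantine}. Write $A_n:=\{x\in[0,1):\log q_n(x)/n\ge b+\varepsilon\}$; then $x\in A_n$ if and only if $q_n(x)\ge e^{n(b+\varepsilon)}$, and $A_n$ is a union of rank-$n$ cylinders.

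For the upper bound I would use a Chernoff/Markov-type argument. Since $A_n$ is the union of those cylinders $I_n(a_1,\dots,a_n)$ with $q_n([a_1,\dots,a_n])\ge e^{n(b+\varepsilon)}$, we have $\lambda(A_n)\le\sum_{q_n\ge e^{n(b+\varepsilon)}}q_n^{-2}$. Fix $t\in(0,1)$ and split $q_n^{-2}=q_n^{-t}\cdot q_n^{-(2-t)}$; on every cylinder in the sum $q_n^{-t}\le e^{-nt(b+\varepsilon)}$, so the constraint decouples and $\lambda(A_n)\le e^{-nt(b+\varepsilon)}\sum_{(a_1,\dots,a_n)\in\mathbb{N}^n}q_n^{-2(1-t/2)}([a_1,\dots,a_n])$. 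Because $0<t<1$ forces $1-t/2\in(1/2,1)$, the right-hand sum is exactly the $n$-th sum defining the Diophantine pressure function at $1-t/2$, so $\tfrac1n\log\sum q_n^{-2(1-t/2)}\to\mathrm{P}(1-t/2)$; taking $\tfrac1n\log$, then $\limsup_{n\to\infty}$, and then the infimum over $t\in(0,1)$ gives $\limsup_n\tfrac1n\log\lambda(A_n)\le\theta_1(\varepsilon)$. To see $\theta_1(\varepsilon)<0$ I would let $t\to0^+$ and use \eqref{ele}: $-t(b+\varepsilon)+\mathrm{P}(1-t/2)=-t(b+\varepsilon)+\mathrm{P}(1)-\tfrac t2\mathrm{P}'(1)+o(t)=-t\varepsilon+o(t)$, which is negative for all small enough $t$, so the infimum is strictly negative.

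For the lower bound I would simply exhibit one good cylinder. Put $m:=b_\varepsilon=\lceil e^{b+\varepsilon}\rceil$ and consider $I_n(m,m,\dots,m)$, which has positive measure. From $q_k=mq_{k-1}+q_{k-2}$ with $q_0=1$, $q_{-1}=0$, an immediate induction gives $m^k\le q_k([m,\dots,m])\le(m+1)^k$; in particular $q_n([m,\dots,m])\ge m^n\ge e^{n(b+\varepsilon)}$ because $m\ge e^{b+\varepsilon}$, so $I_n(m,\dots,m)\subseteq A_n$. Hence $\lambda(A_n)\ge\lambda(I_n(m,\dots,m))\ge\tfrac12 q_n^{-2}([m,\dots,m])\ge\tfrac12(m+1)^{-2n}$, and therefore $\liminf_n\tfrac1n\log\lambda(A_n)\ge-2\log(m+1)$. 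Since $e^{b+\varepsilon}>1$ we have $b_\varepsilon=m\ge2$, so $(m+1)^2\le3m^2$, whence $-2\log(m+1)\ge-2\log b_\varepsilon-\log3$, the asserted inequality.

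I do not expect a genuine obstacle: both parts are short. The only step needing care is the bookkeeping in the upper bound — choosing the exponent split $q_n^{-2}=q_n^{-t}q_n^{-(2-t)}$ so that the residual sum matches the definition of $\mathrm{P}(1-t/2)$ with $1-t/2$ in the admissible range $(1/2,1)$ — together with the use of the exact value $\mathrm{P}'(1)=-\pi^2/(6\log2)=-2b$ from \eqref{ele} to secure the strict sign $\theta_1(\varepsilon)<0$. The lower bound is an immediate one-cylinder computation, and the extra $-\log3$ in its statement is just a harmless weakening of the sharper bound $-2\log(b_\varepsilon+1)$ that the computation actually yields.
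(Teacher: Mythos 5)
Your proof is correct. The upper bound is essentially the paper's argument: the paper applies Markov's inequality to $q_n^t$ and invokes its Lemma \ref{proofs} (which identifies $\lim_n \frac1n\log \mathrm{E}(q_n^{t})$ with $\mathrm{P}(1-t/2)$ via the cylinder-length estimate), whereas you simply inline that lemma by bounding $\lambda(A_n)$ directly by the cylinder sums and splitting $q_n^{-2}=q_n^{-t}q_n^{-(2-t)}$; the negativity of $\theta_1(\varepsilon)$ is obtained in both cases from $\mathrm{P}(1)=0$, $\mathrm{P}'(1)=-2b$. For the lower bound you exhibit the same cylinder $I(b_\varepsilon,\dots,b_\varepsilon)$ as the paper (the paper reaches it through $q_n\ge a_1\cdots a_n$), but you estimate its measure differently: the paper proves and iterates the ratio lemma $\lvert I(a_1,\dots,a_{n+1})\rvert/\lvert I(a_1,\dots,a_n)\rvert\ge \tfrac{1}{3a_{n+1}^2}$ (its Lemma \ref{Khin57}), while you bound $q_n([m,\dots,m])\le (m+1)^n$ by the recursion and use $\lambda(I_n)\ge \tfrac12 q_n^{-2}$, which is more elementary, bypasses that lemma, and in fact yields the slightly sharper constant $-2\log(b_\varepsilon+1)\ge -2\log b_\varepsilon-\log 3$ (your reduction $(m+1)^2\le 3m^2$ for $m\ge 2$ is valid since $b_\varepsilon\ge \lceil e^{b}\rceil\ge 3$). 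So the two routes agree in substance, with your lower-bound bookkeeping a touch cleaner and quantitatively a little stronger.
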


\begin{remark}\label{1}
By the domain of the function $\mathrm{P (\cdot)}$, we can write $\theta_1(\varepsilon)$ as
\[
\theta_1(\varepsilon) = \inf\limits_{t < 1} \big\{-t(b + \varepsilon) + \mathrm{P}(1-t/2)\big\}.
\]
In fact, for any $\varepsilon >0$, let $f(t) = -t(b + \varepsilon) + \mathrm{P}(1-t/2)$ for any $t \leq 0$. Since $\mathrm{P(\cdot)}$ is convex and real-analytic on $(1/2,+\infty)$, we know $\mathrm{P}^\prime(1-t/2) \geq \mathrm{P}^\prime(1)$ for any $t \leq 0$. It follows from (\ref{ele}) that
\[
f^\prime(t) = -(b+\varepsilon) -2^{-1}\cdot \mathrm{P}^\prime(1-t/2) \leq -(b+\varepsilon) -2^{-1}\cdot \mathrm{P}^\prime(1) = -\varepsilon <0
\]
for any $t \leq 0$. So $f(\cdot)$ is non-increasing on $(-\infty,0]$ and hence $f(t) \geq 0$ for any $t \leq 0$. As a consequence, it is easy to check that
\[
\theta_1(\varepsilon) = 2(b+\varepsilon)\big(\tau(b+\varepsilon)-1\big)
\]
and hence that $-(b+\varepsilon) < \theta_1(\varepsilon)<0$ and $\theta_1(\varepsilon)$ tends to zero as $\varepsilon$ goes to zero since the spectral function $\tau(\cdot)$ has a unique maximum value equal to 1 at point $b$.
\end{remark}

\begin{theorem}\label{xiaoyu}
For any $0< \varepsilon \leq b$, we have
\begin{equation*}
\limsup_{n \to \infty} \frac{1}{n}\log \lambda\left\{x\in [0,1):\frac{\log q_n(x)}{n} \leq b -\varepsilon\right\} \leq \theta_2(\varepsilon)
\end{equation*}
and for any $0< \varepsilon \leq b - \log 2$,
\begin{equation*}
\liminf_{n \to \infty} \frac{1}{n}\log \lambda\left\{x\in [0,1):\frac{\log q_n(x)}{n} \leq b -\varepsilon\right\} \geq -2\log b^{\ast}_\varepsilon - \log 3,
\end{equation*}
where $\theta_2(\varepsilon) = \inf\limits_{t> 0} \big\{t(b - \varepsilon) + \mathrm{P}(1+t/2)\big\}< 0$ and $b^{\ast}_\varepsilon$ denotes the largest integer no greater than $(e^{b-\varepsilon}-1)$.
\end{theorem}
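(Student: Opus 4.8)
The plan is to prove the two bounds separately: the upper bound via an exponential Markov inequality together with the definition of the Diophantine pressure function $\mathrm{P}(\cdot)$, and the lower bound by exhibiting a single explicit cylinder that is contained in the relevant set and estimating its length from below.

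\textbf{Upper bound.} Fix $t>0$. Since $\log q_n(x)/n\le b-\varepsilon$ is equivalent to $q_n(x)^{-t}\ge e^{-tn(b-\varepsilon)}$, Markov's inequality gives $\lambda\{x:\log q_n(x)/n\le b-\varepsilon\}\le e^{tn(b-\varepsilon)}\int_0^1 q_n(x)^{-t}\,dx$. On each rank-$n$ cylinder $\Delta(a_1,\cdots,a_n)$ the value $q_n(x)$ is the constant $q_n([a_1,\cdots,a_n])$, and $\lambda(\Delta(a_1,\cdots,a_n))=\big(q_n(q_n+q_{n-1})\big)^{-1}$ lies between $\tfrac12 q_n^{-2}$ and $q_n^{-2}$ (because $q_{n-1}\le q_n$), so
\[
\int_0^1 q_n(x)^{-t}\,dx\ \le\ \sum_{a_1,\cdots,a_n} q_n^{-t}\cdot q_n^{-2}\ =\ \sum_{a_1,\cdots,a_n} q_n^{-2(1+t/2)}([a_1,\cdots,a_n]).
\]
As $1+t/2>1/2$, the definition of $\mathrm{P}$ gives $\frac1n\log\sum_{a_1,\cdots,a_n}q_n^{-2(1+t/2)}\to\mathrm{P}(1+t/2)$, hence $\limsup_n\frac1n\log\lambda\{\cdots\}\le t(b-\varepsilon)+\mathrm{P}(1+t/2)$; taking the infimum over $t>0$ yields $\theta_2(\varepsilon)$. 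Finally $\theta_2(\varepsilon)<0$ because $g(t):=t(b-\varepsilon)+\mathrm{P}(1+t/2)$ has $g(0)=\mathrm{P}(1)=0$ and, by \eqref{ele}, $g'(0^+)=(b-\varepsilon)+\tfrac12\mathrm{P}'(1)=-\varepsilon<0$, so $g$ is negative just to the right of $0$.

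\textbf{Lower bound.} Set $M=b^{\ast}_\varepsilon$; the hypothesis $\varepsilon\le b-\log2$ forces $e^{b-\varepsilon}\ge2$, hence $M\ge1$, and $M+1\le e^{b-\varepsilon}$. Let $\alpha=\tfrac12(M+\sqrt{M^2+4})$ be the larger root of $z^2-Mz-1$, and consider the cylinder $\Delta_n:=\Delta(\underbrace{M,\cdots,M}_{n})$. Iterating $q_k=Mq_{k-1}+q_{k-2}\le(M+1)q_{k-1}$ from \eqref{recursive}, for $x\in\Delta_n$ we obtain $q_n(x)=q_n([M,\cdots,M])\le(M+1)^n\le(e^{b-\varepsilon})^n$, so $\Delta_n\subseteq\{x:\log q_n(x)/n\le b-\varepsilon\}$ for every $n$. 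Solving \eqref{recursive} for the constant sequence gives $q_n([M,\cdots,M])=(\alpha^{n+1}-(-1/\alpha)^{n+1})/\sqrt{M^2+4}<2\alpha^n$, whence $\lambda(\Delta_n)=\big(q_n(q_n+q_{n-1})\big)^{-1}\ge\tfrac12 q_n^{-2}>\tfrac18\alpha^{-2n}$, and therefore $\liminf_n\frac1n\log\lambda\{x:\log q_n(x)/n\le b-\varepsilon\}\ge\liminf_n\frac1n\log\lambda(\Delta_n)\ge-2\log\alpha$. It remains to note the elementary inequality $\alpha\le\sqrt3\,M$, valid for every integer $M\ge1$ (equivalently $4\le(12-4\sqrt3)M^2$), which gives $-2\log\alpha\ge-2\log(\sqrt3\,M)=-2\log b^{\ast}_\varepsilon-\log3$.

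\textbf{Main obstacle.} The delicate part is the lower bound: one must pick a test set that is simultaneously contained in $\{x:\log q_n(x)/n\le b-\varepsilon\}$ and large enough in measure to land exactly on $-2\log b^{\ast}_\varepsilon-\log3$. The crude bound $q_n([M,\cdots,M])\le(M+1)^n$ by itself only produces $-2\log(M+1)$, which already suffices when $M\ge2$ but is too weak for $M=1$; replacing it by the sharp growth rate $\alpha$ of the purely periodic denominators, together with the elementary comparison $\alpha\le\sqrt3\,M$, is precisely what makes the single formula valid uniformly for all $M\ge1$. The upper bound, by contrast, is a routine computation once one observes that the comparison $\lambda(\Delta(a_1,\cdots,a_n))\asymp q_n^{-2}$ converts the exponential moment $\int q_n^{-t}\,dx$ into the sum defining $\mathrm{P}(1+t/2)$.
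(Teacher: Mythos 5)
Your proof is correct. The limsup part is essentially the paper's argument: Markov's inequality applied to $q_n^{-t}$, conversion of the moment $\mathrm{E}(q_n^{-t})$ into $\sum q_n^{-2(1+t/2)}$ via $\lambda(I(a_1,\cdots,a_n))\asymp q_n^{-2}$ (the paper packages this as Lemma \ref{proofs}), and negativity of $\theta_2(\varepsilon)$ from $\mathrm{P}(1)=0$, $\mathrm{P}'(1)=-2b$. The liminf part uses the same test set as the paper, namely the constant cylinder with all digits equal to $M=b^\ast_\varepsilon$ and the same containment argument $q_n\le (M+1)^n\le e^{n(b-\varepsilon)}$; where you diverge is in bounding the cylinder's length from below. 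The paper iterates Khintchine's ratio inequality $|I(a_1,\cdots,a_{n+1})|/|I(a_1,\cdots,a_n)|\ge 1/(3a_{n+1}^2)$ (its Lemma \ref{Khin57}) to get $|I(M,\cdots,M)|\ge (3M^2)^{-n}$ directly, whereas you solve the recursion $q_k=Mq_{k-1}+q_{k-2}$ exactly, obtain the growth rate $\alpha=\tfrac12(M+\sqrt{M^2+4})$, and then use the elementary comparison $\alpha\le\sqrt{3}\,M$ (valid for all integers $M\ge1$, and genuinely needed only for $M=1$, as you note). Your route avoids the ratio lemma and in fact yields the marginally sharper bound $-2\log\alpha\ge -2\log b^\ast_\varepsilon-\log 3$, at the cost of the explicit Binet-type computation; the paper's route is shorter given its Lemma \ref{Khin57} and reuses the same mechanism in the proof of Theorem \ref{dayu}. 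All the individual estimates you invoke ($q_n<2\alpha^n$, $\lambda(I_n)\ge\tfrac12 q_n^{-2}$, $M+1\le e^{b-\varepsilon}$, and the equivalence $4\le(12-4\sqrt3)M^2$) check out.
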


\begin{remark}\label{2}
Being similar to Remark \ref{1}, $\theta_2(\varepsilon)$ can be written as
\[
\theta_2(\varepsilon) = \inf\limits_{t >- 1} \big\{t(b - \varepsilon) + \mathrm{P}(1+t/2)\big\}
\]
for any $0 < \varepsilon \leq b$. Moreover, it also has another alternative form
\[
\theta_2(\varepsilon) = 2(b-\varepsilon)\big(\tau(b-\varepsilon)-1\big),
\]
which only holds for $0 <\varepsilon \leq b - \log((\sqrt{5}+1)/2)$ form the definition of $\tau(\cdot)$. In this case, it is clear to see that $-2(b-\varepsilon) \leq \theta_2(\varepsilon) <0$ and $\theta_2(\varepsilon)$ tends to zero as $\varepsilon$ goes to zero.
\end{remark}

The following is a result of large deviations for $\log q_n(x)/n$, which improves the result of Theorem \ref{Levy theorem} by Borel-Cantelli lemma.

\begin{theorem}\label{Large deviations}
For any $\varepsilon > 0$, there exist constants $A,B > 0$ and $\alpha, \beta > 0$ (both only depending on $\varepsilon$) such that for all $n \geq 1$, we have
 \[
Be^{-\beta n} \leq \lambda\left\{x \in [0,1):\left|\frac{\log q_n(x)}{n}- \frac{\pi^2}{12\log2}\right| \geq \varepsilon\right\} \leq Ae^{-\alpha n}.
\]
\end{theorem}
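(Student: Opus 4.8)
# Proof Proposal for Theorem 2.7 (Large Deviations)

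The plan is to deduce this two-sided exponential estimate directly from Theorems \ref{dayu} and \ref{xiaoyu}, which already supply the matching upper and lower asymptotic bounds for the one-sided deviation probabilities. First I would decompose the symmetric event by writing
\[
\lambda\left\{x:\left|\tfrac{\log q_n(x)}{n}-b\right|\geq\varepsilon\right\}=\lambda\left\{x:\tfrac{\log q_n(x)}{n}\geq b+\varepsilon\right\}+\lambda\left\{x:\tfrac{\log q_n(x)}{n}\leq b-\varepsilon\right\},
\]
where (if $\varepsilon>b-\log((\sqrt5+1)/2)$, or more safely if $\varepsilon>b-\gamma_0$, the second set is empty for large $n$ since $\beta_*(x)\geq\gamma_0$, so only the first term matters; in general both terms are handled the same way). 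For the upper bound, Theorem \ref{dayu} gives $\limsup_n \frac1n\log\lambda\{\log q_n/n\geq b+\varepsilon\}\leq\theta_1(\varepsilon)<0$, and Theorem \ref{xiaoyu} gives $\limsup_n\frac1n\log\lambda\{\log q_n/n\leq b-\varepsilon\}\leq\theta_2(\varepsilon)<0$ (valid for $0<\varepsilon\leq b$; for $\varepsilon>b$ the set is eventually empty). Setting $\alpha_0=\tfrac12\min\{-\theta_1(\varepsilon),-\theta_2(\varepsilon)\}>0$, there is $N$ such that for $n\geq N$ each of the two probabilities is at most $e^{-\alpha_0 n}$, hence their sum is at most $2e^{-\alpha_0 n}$. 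Then choose $\alpha\in(0,\alpha_0)$ and absorb the finitely many indices $n<N$ together with the factor $2$ into a constant $A$: since each of the finitely many values $\lambda\{\cdots\}e^{\alpha n}$ for $n<N$ is a finite number, and for $n\geq N$ we have $\lambda\{\cdots\}e^{\alpha n}\leq 2e^{-(\alpha_0-\alpha)n}\to0$, the supremum over all $n\geq1$ of $\lambda\{\cdots\}e^{\alpha n}$ is finite; call it $A$. This yields $\lambda\{\cdots\}\leq Ae^{-\alpha n}$ for all $n\geq1$.

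For the lower bound I would argue similarly but in the reverse direction, using only the first term of the decomposition (dropping the nonnegative second term): by Theorem \ref{dayu},
\[
\liminf_n\frac1n\log\lambda\left\{x:\tfrac{\log q_n(x)}{n}\geq b+\varepsilon\right\}\geq -2\log b_\varepsilon-\log3=:-\beta_0,
\]
with $\beta_0>0$ finite, where $b_\varepsilon$ is the smallest integer $\geq e^{b+\varepsilon}$. Hence for any $\beta>\beta_0$ there is $N'$ with $\lambda\{\log q_n/n\geq b+\varepsilon\}\geq e^{-\beta n}$ for $n\geq N'$, and therefore $\lambda\{|\log q_n/n-b|\geq\varepsilon\}\geq e^{-\beta n}$ for $n\geq N'$. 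To get a bound valid for all $n\geq1$, note that each probability $\lambda\{|\log q_n(x)/n-b|\geq\varepsilon\}$ with $1\leq n<N'$ is strictly positive: for fixed $n$, the function $x\mapsto\log q_n(x)/n$ is not constant and takes values on both sides of $b$ (indeed $q_n$ can be as small as a Fibonacci number, giving $\log q_n/n$ near $\gamma_0<b$, and arbitrarily large on cylinders with a big partial quotient), so the event has positive Lebesgue measure. Taking the minimum over the finitely many indices $n<N'$ of the positive quantities $\lambda\{\cdots\}e^{\beta n}$, together with the tail bound, produces a constant $B>0$ with $\lambda\{\cdots\}\geq Be^{-\beta n}$ for all $n\geq1$.

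The main obstacle, such as it is, is purely bookkeeping: converting the $\limsup$/$\liminf$ statements of the previous theorems (which are asymptotic, i.e.\ hold only for large $n$) into clean inequalities valid for \emph{all} $n\geq1$ with uniform constants. This requires (i) verifying that the finitely many "small-$n$" deviation probabilities are all strictly positive and finite, which follows from the explicit structure of cylinder sets and the recursion \eqref{recursive}, and (ii) a careful but routine choice of the exponents $\alpha<\alpha_0$ and $\beta>\beta_0$ so that the exponential tail dominates and the finitely many exceptional terms can be swallowed into the multiplicative constants $A,B$. No new analytic input beyond Theorems \ref{dayu} and \ref{xiaoyu} is needed; everything is a consequence of those two estimates plus elementary positivity of cylinder measures.
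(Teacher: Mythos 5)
Your proposal is correct and follows essentially the same route as the paper: decompose the two-sided event into the two one-sided deviation sets, apply the $\limsup$ bounds of Theorems \ref{dayu} and \ref{xiaoyu} (resp.\ the $\liminf$ bound of Theorem \ref{dayu}) to get the exponential rates for large $n$, and absorb the finitely many small $n$ into the constants $A$ and $B$. The only difference is cosmetic: the paper dismisses the lower bound with ``similar methods,'' while you spell out the needed positivity of $\lambda\{|\log q_n(x)/n-b|\geq\varepsilon\}$ for each fixed $n$ (which indeed follows from the cylinder with all partial quotients equal to $b_\varepsilon$, exactly as in the paper's liminf argument).
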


\section{The proofs of theorems}

This section is devoted to giving the proofs of our main results. We denote by $\mathbb{I}$ the set of all irrational numbers in $[0,1)$ and use the notation $\mathrm{E}(\xi)$ to denote the expectation of a random variable $\xi$ w.r.t.~the Lebesgue measure $\lambda$.
 For any $n \in \mathbb{N}$ and $a_1, a_2, \cdots, a_n\in \mathbb{N}$, we call
\begin{equation*}
I(a_1, \cdots, a_n):= \left\{x \in \mathbb{I}: a_1(x)=a_1, \cdots, a_n(x)=a_n\right\}
\end{equation*}
the $n$-th order \emph{cylinder} of continued fractions. It is well-known (see \cite{lesDK02, lesIK02}) that $I(a_1, \cdots, a_n)$ is an interval with the endpoints $p_nq_n^{-1}$ and $(p_n +p_{n-1})(q_n +q_{n-1})^{-1}$. As a consequence, the length of $I(a_1, \cdots, a_n)$ denoted by $|I(a_1, \cdots, a_n)|$, is equal to $q_n^{-1}(q_n+q_{n-1})^{-1}$, where the quantities $p_n$ and $q_n$ are obtained by the recursive formula (\ref{recursive}).
The following lemma establishes a relation between the Diophantine pressure function $\mathrm{P}(\cdot)$ and the growth of the expectation of $q_n$, which plays an important role in our proofs.

\begin{lemma}\label{proofs}
For any $\theta <1/2$,
\[
\mathrm{P}(1-\theta) = \lim_{n \to \infty}\frac{1}{n} \log \mathrm{E}\left(q_n^{2\theta}\right).
\]
\end{lemma}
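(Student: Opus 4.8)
The plan is to expand $\mathrm{E}(q_n^{2\theta})$ as a sum over the $n$-th order cylinders and compare it, term by term, with the series defining $\mathrm{P}(1-\theta)$. The key observation is that $q_n(x)$ depends only on $a_1(x),\dots,a_n(x)$, so $q_n$ is constant on each cylinder $I(a_1,\dots,a_n)$; since the rationals form a $\lambda$-null set, the family $\{I(a_1,\dots,a_n)\}$ partitions $[0,1)$ up to measure zero, and combining this with the length formula $|I(a_1,\dots,a_n)| = q_n^{-1}(q_n+q_{n-1})^{-1}$ recalled above gives
\[
\mathrm{E}\left(q_n^{2\theta}\right) = \int_0^1 q_n^{2\theta}(x)\,dx = \sum_{a_1,\dots,a_n} \frac{q_n^{2\theta}}{q_n(q_n+q_{n-1})},
\]
where on the right-hand side $q_n = q_n([a_1,\dots,a_n])$ and $q_{n-1} = q_{n-1}([a_1,\dots,a_n])$.

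Next I would sandwich each summand. By the recursion (\ref{recursive}) and positivity of the $q_k$ one has $q_n \leq q_n + q_{n-1} \leq 2q_n$, so for every block $(a_1,\dots,a_n)$,
\[
\frac{1}{2}\,q_n^{-2(1-\theta)} \leq \frac{q_n^{2\theta}}{q_n(q_n+q_{n-1})} \leq q_n^{-2(1-\theta)},
\]
using $2\theta - 2 = -2(1-\theta)$. Summing over all $(a_1,\dots,a_n)$ yields
\[
\frac{1}{2}\sum_{a_1,\dots,a_n} q_n^{-2(1-\theta)} \leq \mathrm{E}\left(q_n^{2\theta}\right) \leq \sum_{a_1,\dots,a_n} q_n^{-2(1-\theta)}.
\]

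Finally, apply $\tfrac{1}{n}\log(\cdot)$ to this chain of inequalities. The term $\tfrac{1}{n}\log\tfrac{1}{2}$ tends to $0$, so the lower and upper bounds share the common limit
\[
\lim_{n\to\infty}\frac{1}{n}\log\sum_{a_1,\dots,a_n} q_n^{-2(1-\theta)} = \mathrm{P}(1-\theta),
\]
which exists precisely because the hypothesis $\theta < 1/2$ places the exponent $1-\theta$ inside the interval $(1/2,+\infty)$ on which the Diophantine pressure function is defined (and the series is finite there). Therefore $\lim_{n\to\infty}\tfrac{1}{n}\log\mathrm{E}(q_n^{2\theta}) = \mathrm{P}(1-\theta)$. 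I do not anticipate a genuine obstacle here: the argument is a direct computation, and the only points deserving a word of care are the cylinder-measurability of $q_n$ (so that the integral splits exactly into the displayed sum) and the role of the standing assumption $\theta<1/2$ in keeping $1-\theta$ within the domain of $\mathrm{P}(\cdot)$.
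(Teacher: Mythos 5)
Your proposal is correct and follows essentially the same route as the paper: expand $\mathrm{E}(q_n^{2\theta})$ over the $n$-th order cylinders, sandwich $\lambda(I(a_1,\dots,a_n))$ between $\tfrac{1}{2}q_n^{-2}$ and $q_n^{-2}$, and pass to $\tfrac{1}{n}\log(\cdot)$ using that $1-\theta>1/2$ lies in the domain of $\mathrm{P}$. The only cosmetic difference is that you derive the measure bounds from the exact length formula via $q_n\leq q_n+q_{n-1}\leq 2q_n$, while the paper quotes the bounds directly.
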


\begin{proof}
By the definition of expectation, we know that
\begin{equation}\label{qiwang}
E\left(q_n^{2\theta}\right) = \sum_{a_1,\cdots,a_n} q_n^{2\theta}([a_1,\cdots,a_n])\cdot \lambda\left(I(a_1,\cdots,a_n)\right),
\end{equation}
where $a_1,\cdots,a_n$ run over all the positive integers.
Since
\begin{equation*}
\frac{1}{2q_n^2([a_1,\cdots,a_n])}\leq \lambda\left(I(a_1,\cdots,a_n)\right) = |I(a_1, \cdots, a_n)| \leq \frac{1}{q_n^2([a_1,\cdots,a_n])},
\end{equation*}
combing this with (\ref{qiwang}), we deduce that
\[
\frac{1}{2}\cdot \sum_{a_1,\cdots,a_n} q_n^{-2(1-\theta)}([a_1,\cdots,a_n]) \leq E\left(q_n^\theta\right) \leq \sum_{a_1,\cdots,a_n} q_n^{-2(1-\theta)}([a_1,\cdots,a_n])
\]
and hence that
\[
\mathrm{P}(1-\theta) = \lim_{n \to \infty}\frac{1}{n} \log \mathrm{E}\left(q_n^{2\theta}\right) \ \ \ \text{for any $\theta <1/2$}.
\]
\end{proof}

\subsection{Proof of Theorem \ref{dayu}}
The proof is divided into two parts: limsup part and liminf part. The proof of limsup part heavily relies on the Markov's inequality and Lemma \ref{proofs}. The idea of the proof of liminf part is from finding a subset inside whose Lebesgue measure decays to 0 exponentially.

\begin{proof}[Proof of the limsup part]
Let $0<t<1$ be a parameter. Notice that
\begin{equation*}
\lambda \left\{x \in \mathbb{I}:\frac{\log q_n(x)}{n} \geq b + \varepsilon\right\} = \lambda \left\{x \in \mathbb{I}: q_n^{t}(x) \geq e^{nt(b + \varepsilon)}\right\},
\end{equation*}
in view of Markov's inequality, we have that
\begin{equation}\label{chebyshev}
\lambda \left\{x \in \mathbb{I}: \frac{\log q_n(x)}{n} \geq b + \varepsilon\right\} \leq e^{-nt(b+ \varepsilon)}\cdot E\left(q_n^{t}\right).
\end{equation}

By Lemma \ref{proofs}, we know
\[
\lim_{n \to \infty}\frac{1}{n} \log \mathrm{E}\left(q_n^{t}\right)= \mathrm{P}(1-t/2).
\]
Hence, for any $\eta >0$, there exists a positive number $N$ (depending on $\eta$) such that for all $n \geq N$, we have
\[
\mathrm{E}\left(q_n^{t}\right) \leq e^{n(\mathrm{P}(1-t/2) + \eta)}.
\]
Fixed such $n \geq N$, it follows from (\ref{chebyshev}) that
\begin{equation}\label{huajian}
\lambda \left\{x \in \mathbb{I}: \frac{\log q_n(x)}{n} \geq b + \varepsilon\right\} \leq e^{-nt(b+ \varepsilon)+n(\mathrm{P}(1-t/2) + \eta)}.
\end{equation}
Taking the logarithm on both sides of the inequality (\ref{huajian}), we deduce that
\begin{equation*}
\frac{1}{n}\log \lambda \left\{x \in \mathbb{I}: \frac{\log q_n(x)}{n} \geq b + \varepsilon\right\} \leq - t(b + \varepsilon) + \mathrm{P}(1-t/2) + \eta.
\end{equation*}
Thus, for all $0<t<1$, we obtain that
\begin{equation*}
\limsup_{n \to \infty} \frac{1}{n}\log \lambda \left\{x \in \mathbb{I}: \frac{\log q_n(x)}{n} \geq b + \varepsilon\right\} \leq -t(b + \varepsilon) + \mathrm{P}(1-t/2)
\end{equation*}
since $\eta >0$ is arbitrary.
Therefore,
\begin{equation*}
\limsup_{n \to \infty} \frac{1}{n}\log \lambda \left\{x \in \mathbb{I}: \frac{\log q_n(x)}{n} \geq b + \varepsilon\right\} \leq \theta_1(\varepsilon)
\end{equation*}
with
\[
\theta_1(\varepsilon) = \inf_{0< t < 1} \big\{-t(b + \varepsilon) + \mathrm{P}(1-t/2)\big \}.
\]
Now it remains to show that $\theta_1(\varepsilon) < 0$. In fact, let $h(u)$ be the function defined as
\[
h(u) = -u(b + \varepsilon) + \mathrm{P}(1-u/2)\ \ \text{for any}\ u <1 .
\]
In view of (\ref{ele}), it is easy to check that $h(0) = 0$ and $h^{\prime}(0) = -\varepsilon < 0$. Hence there exists $u_0 >0$ such that $h(u_0) < 0$ by the definition of derivative. Therefore, we complete the proof of the limsup part by observing that $\theta_1(\varepsilon) \leq h(u_0) < 0$.
\end{proof}

To prove the liminf part, we need the following lemma (see \cite{lesKhi64}).

\begin{lemma} [\cite{lesKhi64}]\label{Khin57}
For any $n \geq 1$ and $a_1, \cdots, a_n, a_{n+1}\in \mathbb{N}$, we have
\begin{equation*}
\frac{1}{3a_{n+1}^2}\leq \frac{|I(a_1, \cdots, a_n, a_{n+1})|}{|I(a_1, \cdots, a_n)|} \leq \frac{2}{a_{n+1}^2}.
\end{equation*}
\end{lemma}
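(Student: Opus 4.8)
The plan is to reduce everything to the exact length formula for cylinders, $|I(a_1, \cdots, a_n)| = q_n^{-1}(q_n+q_{n-1})^{-1}$ recorded above, together with the recursion $q_{n+1} = a_{n+1}q_n + q_{n-1}$ from (\ref{recursive}). Applying the length formula at levels $n$ and $n+1$ and writing $a := a_{n+1}$, and using $q_{n+1}+q_n = (a+1)q_n + q_{n-1}$, we get
\[
\frac{|I(a_1,\cdots,a_n,a_{n+1})|}{|I(a_1,\cdots,a_n)|} = \frac{q_n(q_n+q_{n-1})}{q_{n+1}(q_{n+1}+q_n)} = \frac{q_n(q_n+q_{n-1})}{(aq_n+q_{n-1})\big((a+1)q_n+q_{n-1}\big)}.
\]
Dividing numerator and denominator by $q_n^2$ and setting $r := q_{n-1}/q_n$, the ratio becomes $\varphi(r) := \dfrac{1+r}{(a+r)(a+1+r)}$. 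Since $1 \le q_{n-1} \le q_n$ for every admissible index, we have $r \in (0,1]$, so the lemma reduces to the purely elementary two-sided estimate $\frac{1}{3a^2} \le \varphi(r) \le \frac{2}{a^2}$, valid for $a\in\mathbb{N}$ and $r \in (0,1]$.

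The upper bound is immediate: $r \le 1$ gives $1 + r \le 2$, while $(a+r)(a+1+r) \ge a^2$, hence $\varphi(r) \le 2/a^2$. For the lower bound I would clear denominators and verify the polynomial inequality
\[
3a^2(1+r) \ge (a+r)(a+1+r) \qquad \text{for } a\in\mathbb{N},\ r\in(0,1],
\]
which, after expansion, is equivalent to $2a^2 - a + r\big(3a^2-2a-1-r\big) \ge 0$. When $a \ge 2$ every term on the left is nonnegative (indeed $2a^2-a>0$ and $3a^2-2a-1-r \ge 3a^2-2a-2>0$), so there is nothing to prove; when $a = 1$ the left side collapses to $1 - r^2 \ge 0$, which holds precisely because $r \le 1$. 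This yields $\varphi(r) \ge 1/(3a^2)$, and the bound is sharp (equality at $a=1$, $r=1$).

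The one genuinely delicate point is the constant $1/3$ in the lower bound. The crude estimates one is first tempted to use — for instance $q_{n+1}\le (a+1)q_n$ and $q_{n+1}+q_n \le (a+2)q_n$, which give only $\varphi(r)\ge 1/\big((a+1)(a+2)\big)$ — are too lossy when $a_{n+1}=1$, where they produce $1/6$ rather than $1/3$. So the main obstacle is to avoid discarding the factor $q_n+q_{n-1}$ in the numerator prematurely; keeping it and exploiting $q_{n-1}\le q_n$ through the normalized variable $r$ is exactly what produces the correct $1/(3a^2)$. Everything else is routine algebra, and the boundary case $n=1$ (where $q_{n-1}=q_0=1$) needs no separate treatment, being already covered by $r\in(0,1]$.
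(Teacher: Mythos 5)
Your proof is correct and follows essentially the same route as the paper: both start from the exact cylinder length formula $|I(a_1,\cdots,a_n)| = q_n^{-1}(q_n+q_{n-1})^{-1}$, normalize by $q_n^2$, and bound the resulting expression in the variable $r = q_{n-1}/q_n \in (0,1]$. The only cosmetic difference is in the final elementary step, where the paper factors out $1/a_{n+1}^2$ and bounds the remaining factor between $1/3$ and $2$ directly (using $z_n = r/a_{n+1} \le r$ and $1 + 1/a_{n+1} + z_n \le 3$), whereas you clear denominators and verify a polynomial inequality with a case split at $a_{n+1}=1$; both verifications are valid.
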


\begin{proof}
For any $n \geq 1$ and $a_1, \cdots, a_n, a_{n+1}\in \mathbb{N}$,
we know that
\[
|I(a_1, \cdots, a_n)| = \frac{1}{q_n(q_n+q_{n-1})}\ \ \text{and}\ \ |I(a_1, \cdots, a_n, a_{n+1})| = \frac{1}{q_{n+1}(q_{n+1}+q_n)},
\]
where the quantities $p_{n-1}$, $q_{n-1}$, $p_n$, $q_n$, $p_{n+1}$ and $q_{n+1}$ satisfy the recursive formula (\ref{recursive}). Therefore,
\begin{equation}\label{Khinchine57}
\frac{|I(a_1, \cdots, a_n, a_{n+1})|}{|I(a_1, \cdots, a_n)|}  = \frac{q_n(q_n+q_{n-1})}{q_{n+1}(q_{n+1}+q_n)} = \frac{1}{a^2_{n+1}} \cdot \frac{1+y_n}{(1+z_n)(1+1/a_{n+1}+z_n)},
\end{equation}
where $y_n = q_{n-1}/q_n \in [0,1)$, $z_n = y_n/a_{n+1}$ and the last equation follows from the recursive formula $q_{n+1} = a_{n+1}q_n + q_{n-1}$. The second factor on the last term of (\ref{Khinchine57}) is obviously not greater than 2 since $y_n<1$, $a_{n+1} >0$ and $z_n >0$. Notice that $a_{n+1} \geq 1$ and $0 \leq y_n<1$, we deduce that
\[
\frac{1+y_n}{1+z_n} \geq 1\ \ \text{and}\ \ 1+\frac{1}{a_{n+1}}+z_n \leq 3.
\]
This implies that the second factor on the last term of (\ref{Khinchine57}) is not less than 1/3. Thus, we complete the proof.
\end{proof}

We are ready to give the proof of the part of liminf in Theorem \ref{dayu}.

\begin{proof}[Proof of the liminf part]
For any $x \in [0,1)$, by the recursive formula (\ref{recursive}), we know that
\begin{equation*}
q_n(x)= a_n(x) q_{n-1}(x) + q_{n-2}(x) \geq a_n(x) q_{n-1}(x)\geq \cdots \geq a_n(x)\cdots a_1(x).
\end{equation*}
Hence that
\begin{align}\label{xiao}
\left\{x \in \mathbb{I}: \frac{\log q_n(x)}{n} \geq b + \varepsilon\right\} \supseteq
\left\{x \in \mathbb{I}:\frac{\log a_1(x) \cdots a_n(x)}{n} \geq b + \varepsilon\right\}.
\end{align}
Let $b_\varepsilon$ be the smallest integer no less than $e^{b+\varepsilon}$.
Since
\begin{align*}
\left\{x \in \mathbb{I}:\frac{\log a_1(x) \cdots a_n(x)}{n} \geq b + \varepsilon\right\} \supseteq
 \Big\{x \in \mathbb{I}: \log a_1(x) \geq b + \varepsilon, \cdots, \log a_n(x) \geq b + \varepsilon\Big\}
\end{align*}
and
\begin{align*}
\Big\{x \in \mathbb{I}: \log a_1(x) \geq b + \varepsilon, \cdots, \log a_n(x) \geq b + \varepsilon\Big\} \supseteq
\Big\{x \in \mathbb{I}:a_1(x) = b_\varepsilon, \cdots, a_n(x)= b _\varepsilon\Big\},
\end{align*}
combing these with (\ref{xiao}), we deduce that
\begin{align*}
\lambda\left\{x \in \mathbb{I}:\frac{\log q_n(x)}{n} \geq b+ \varepsilon\right\}
&\geq  \lambda \big\{x \in \mathbb{I}:a_1(x) = b_\varepsilon, \cdots, a_n(x)= b _\varepsilon\big\} \\
&= |I(\underbrace{b_\varepsilon, \cdots,b_\varepsilon}_{n})| \\
& \geq \frac{1}{3b_\varepsilon^2} \cdot |I(\underbrace{b_\varepsilon, \cdots,b_\varepsilon}_{n-1})|,
\end{align*}
where the last inequality follows from Lemma \ref{Khin57}. Repeating this procedure $(n-1)$ times, we obtain that
\begin{align*}
\lambda\left\{x \in \mathbb{I}:\frac{\log q_n(x)}{n}  \geq b+ \varepsilon\right\} &\geq \left(\frac{1}{3b_\varepsilon^2}\right)^{n-1} \cdot |I(b_\varepsilon)| \\
&= \left(\frac{1}{3b_\varepsilon^2}\right)^{n-1} \cdot \frac{1}{b_\varepsilon(b_\varepsilon+1)} \geq \left(\frac{1}{3b_\varepsilon^2}\right)^{n}.
\end{align*}
Therefore,
\[
\liminf_{n \to \infty} \frac{1}{n}\log \lambda \left\{x \in \mathbb{I}:\frac{\log q_n(x)}{n} \geq  b+ \varepsilon\right\} \geq - 2\log b_\varepsilon -\log 3.
\]
This gives a lower bound of the desired result. Next we will point out that $- 2\log b_\varepsilon - \log 3 \leq \theta_1(\varepsilon)$. By the definition of $b_\varepsilon$, we know that $- 2\log b_\varepsilon \leq -2(b+\varepsilon)$.
It follows from Remark \ref{1} that $-(b+\varepsilon) \leq \theta_1(\varepsilon)$. So $- 2\log b_\varepsilon-\log 3 < \theta_1(\varepsilon)$.
\end{proof}

\subsection{Proof of Theorem \ref{xiaoyu}}
The proof of Theorem \ref{xiaoyu} is similar to the proof Theorem \ref{dayu}.

\begin{proof}[Completion of the proof of Theorem \ref{xiaoyu}]
We first prove the limsup part. Let $t >0$ be a parameter. Being similar to the proofs of the inequalities (\ref{chebyshev})--(\ref{huajian}), we deduce that
\begin{equation*}
\limsup_{n \to \infty} \frac{1}{n}\lambda \left\{x \in \mathbb{I}: \frac{\log q_n(x)}{n} \leq b - \varepsilon\right\} \leq t(b - \varepsilon) + \mathrm{P}(1+t/2).
\end{equation*}
for any $t>0$. Therefore,
\begin{equation*}
\limsup_{n \to \infty} \frac{1}{n}\log \lambda \left\{x \in \mathbb{I}: \frac{\log q_n(x)}{n} \leq b - \varepsilon\right\} \leq \theta_2(\varepsilon)
\end{equation*}
with
\[
\theta_2(\varepsilon) = \inf_{t> 0}\big\{t(b - \varepsilon) + \mathrm{P}(1+t/2)\big\}.
\]
Now we show that $\theta_2(\varepsilon) < 0$. For any $u >-1$, we consider the function
\[
h(u)= u(b - \varepsilon) + \mathrm{P}(1+u/2).
\]
Notice that $h(0) =0$ and $h^{\prime}(0) <0$ by (\ref{ele}), then $h(t) < 0$ for $t$ sufficiently close to $0$ and hence that $\theta_2(\varepsilon) < 0$.

Next we give the proof of the liminf part. It follows from the recursive formula (\ref{recursive}) that
\[
q_n(x)= a_n(x) q_{n-1}(x) + q_{n-2}(x) \leq (a_n(x)+1) q_{n-1}(x)\leq \cdots \leq (a_n(x)+1)\cdots(a_1(x)+1).
\]
So,
\begin{equation}\label{xiaoyu inf}
\left\{x \in \mathbb{I}: \frac{\log q_n(x)}{n} \leq b - \varepsilon\right\} \supseteq
\left\{x \in \mathbb{I}:\frac{\sum_{k=1}^n \log (a_k(x)+1)}{n} \leq b - \varepsilon\right\}.
\end{equation}
Let $b^\ast_\varepsilon$ be the largest integer less than or equal to $(e^{b -\varepsilon} -1)$. Here we remark that the assumption $0<\varepsilon \leq b- \log 2$ in Theorem \ref{xiaoyu} is just to guarantee that $b^\ast_\varepsilon \geq 1$.
Notice that the right-hand set in (\ref{xiaoyu inf}) contains the following set
\begin{align*}
 \Big\{x \in \mathbb{I}: \log (a_1(x)+1) \leq b - \varepsilon , \cdots, \log (a_n(x)+1) \leq b - \varepsilon\Big\}
\end{align*}
and this subset also contains the following cylinder
\begin{align*}
\Big\{x \in \mathbb{I}:a_1(x) = b^\ast_\varepsilon, \cdots, a_n(x)= b^\ast _\varepsilon\Big\},
\end{align*}
combing this with (\ref{xiaoyu inf}), we obtain that
\begin{align*}
\lambda \left\{x \in \mathbb{I}: \frac{\log q_n(x)}{n} \leq b - \varepsilon\right\} \geq
|I(\underbrace{b^\ast_\varepsilon, \cdots,b^\ast_\varepsilon}_{n})| \geq \left(\frac{1}{3b_\varepsilon^{\ast 2}}\right)^n,
\end{align*}
where the last inequality is from Lemma \ref{Khin57}. Therefore,
\[
\liminf_{n \to \infty} \frac{1}{n}\log \lambda \left\{x \in \mathbb{I}:\frac{\log q_n(x)}{n} \leq b - \varepsilon\right\} \geq - 2\log b^\ast_\varepsilon -\log 3.
\]
\end{proof}

\subsection{Proof of Theorem \ref{Large deviations}}

\begin{proof}[Completion of the proof of Theorem \ref{Large deviations}]
For any $\varepsilon>0$ and $n \geq 1$, since
\begin{align}\label{fenkai}
 &\lambda \left\{x \in \mathbb{I}: \left|\frac{\log q_n(x)}{n} - b\right| \geq \varepsilon\right\} \notag \\
=&\lambda \left\{x \in \mathbb{I}: \frac{\log q_n(x)}{n} \geq b + \varepsilon\right\} + \mathrm{P} \left\{x \in \mathbb{I}: \frac{\log q_n(x)}{n} \leq b - \varepsilon\right\},
\end{align}
we obtain that
\begin{align*}
 &\limsup_{n \to \infty} \frac{1}{n}\log \lambda \left\{x \in \mathbb{I}: \left|\frac{\log q_n(x)}{n} - b\right| \geq \varepsilon\right\} \\
=&\limsup_{n \to \infty} \frac{1}{n}\log \left(\lambda \left\{x \in \mathbb{I}: \frac{\log q_n(x)}{n} \geq b + \varepsilon\right\} + \lambda \left\{x \in \mathbb{I}: \frac{\log q_n(x)}{n} \leq b - \varepsilon\right\}\right)\\
\leq & \max\{\theta_1(\varepsilon),\theta_2(\varepsilon)\}
\end{align*}
where the last inequality follows from the limsups in Theorems \ref{dayu} and \ref{xiaoyu}. Therefore, for any $\varepsilon>0$, there exist positive real $\alpha$ (only depending on $\varepsilon$) and positive integer $N:=N_\varepsilon$ such that for all $n > N$, we have
\begin{equation}\label{probability}
\lambda \left\{x \in \mathbb{I}: \left|\frac{\log q_n(x)}{n} - b\right| \geq \varepsilon\right\} \leq e^{-\alpha n}.
\end{equation}
For any $1 \leq n \leq N$, since the probabilities of the left-hand side in (\ref{probability}) are bounded, we can choose sufficiently large $A$ (only depending on $\varepsilon$) such that
\[
\lambda \left\{x \in \mathbb{I}: \left|\frac{\log q_n(x)}{n} - b\right| \geq \varepsilon\right\} \leq Ae^{-\alpha n}
\]
holds for all $n \geq 1$. Thus, the upper bound of the result in Theorem \ref{Large deviations} is established. The lower bound of the result in Theorem \ref{Large deviations} can also be obtained using the similar methods.
\end{proof}

\section{Applications}
In this section, we will apply our results to the following quantities related to the denominator of convergent $q_n$ in continued fractions. The following notations $\theta_1$, $\theta_2$, $b_\varepsilon$ and $b^\ast_\varepsilon$ are as defined in the Theorems \ref{dayu} and \ref{xiaoyu}.

\subsection{Lyapunov exponents}
Lyapunov exponents measure the exponential rate of divergence of infinitesimally close orbits of a dynamical system. These exponents are intimately related with the global stochastic behavior of the system and are fundamental invariants of a dynamical system. Here we define the \emph{Lyapunov exponent} of the continued fraction transformation $T$ by
\[
l(x):= \lim\limits_{n \to \infty}\frac{1}{n}\log |(T^n)^{\prime}(x)|
\]
if the limit exists. It is well known (see \cite{lesPW99}) that there exists a positive constant $K$ such that for any $x \in [0,1)$,
\[
\frac{1}{2K}q^2_n(x) \leq |(T^n)^{\prime}(x)| \leq Kq^2_n(x).
\]
By Theorem \ref{Levy theorem} and this result, we have that $l(x)$ is constantly $\pi^2/(6\log2)$ for $\lambda$-almost all $x \in [0,1)$.
Combing this with Theorems \ref{dayu}, \ref{xiaoyu} and \ref{Large deviations}, we know

\begin{theorem}
For any $\varepsilon >0$,
\begin{equation*}
\limsup_{n \to \infty} \frac{1}{n}\log \lambda\left\{x\in [0,1):\frac{1}{n}\log |(T^n)^{\prime}(x)| - 2b \geq \varepsilon\right\} \leq \theta_1(\varepsilon/2)
\end{equation*}
 and
\begin{equation*}
\liminf_{n \to \infty} \frac{1}{n}\log \lambda\left\{x\in [0,1):\frac{1}{n}\log |(T^n)^{\prime}(x)| - 2b \geq \varepsilon\right\} \geq -2\log b_{\varepsilon/2}-\log 3.
\end{equation*}
\end{theorem}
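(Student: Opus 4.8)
The plan is to derive this theorem as a direct corollary of Theorems \ref{dayu}, \ref{xiaoyu} and \ref{Large deviations}, via the sandwich estimate for the derivative already recorded in the text, namely that there is a constant $K>0$ with $(2K)^{-1}q_n^2(x) \leq |(T^n)'(x)| \leq Kq_n^2(x)$ for every $x$. Taking logarithms and dividing by $n$, this gives
\[
\frac{2\log q_n(x)}{n} - \frac{\log(2K)}{n} \leq \frac{1}{n}\log|(T^n)'(x)| \leq \frac{2\log q_n(x)}{n} + \frac{\log K}{n}.
\]
Since $\log K/n$ and $\log(2K)/n$ tend to $0$, the quantity $\frac1n\log|(T^n)'(x)| - 2b$ differs from $2\big(\frac{\log q_n(x)}{n} - b\big)$ by an error that is uniform in $x$ and vanishes as $n\to\infty$; in particular the event $\{x : \frac1n\log|(T^n)'(x)| - 2b \geq \varepsilon\}$ is squeezed between $\{x : \frac{\log q_n(x)}{n} - b \geq \varepsilon/2 + o(1)\}$ and $\{x : \frac{\log q_n(x)}{n} - b \geq \varepsilon/2 - o(1)\}$.

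For the \textbf{limsup} bound, first I would fix an arbitrary small $\delta>0$. For all large $n$ the uniform error term is below $\delta n$, so
\[
\left\{x\in\mathbb{I}:\tfrac1n\log|(T^n)'(x)| - 2b \geq \varepsilon\right\} \subseteq \left\{x\in\mathbb{I}:\frac{\log q_n(x)}{n} \geq b + \frac{\varepsilon}{2} - \delta\right\}.
\]
Applying the limsup half of Theorem \ref{dayu} with $\varepsilon/2-\delta$ in place of $\varepsilon$ gives
\[
\limsup_{n\to\infty}\frac1n\log\lambda\left\{x:\tfrac1n\log|(T^n)'(x)| - 2b \geq \varepsilon\right\} \leq \theta_1(\varepsilon/2 - \delta).
\]
Then let $\delta\to 0$ and invoke continuity of $\theta_1(\cdot)$ — which follows from the real-analyticity of $\mathrm{P}(\cdot)$ on $(1/2,+\infty)$, or alternatively from the closed form $\theta_1(\varepsilon)=2(b+\varepsilon)(\tau(b+\varepsilon)-1)$ and analyticity of $\tau$ on $(\gamma_0,+\infty)$ — to conclude the bound $\theta_1(\varepsilon/2)$. (Since $\lambda([0,1)\setminus\mathbb{I})=0$ the set $[0,1)$ versus $\mathbb{I}$ distinction is immaterial throughout.)

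For the \textbf{liminf} bound, the inclusion runs the other way: for all large $n$,
\[
\left\{x\in\mathbb{I}:\tfrac1n\log|(T^n)'(x)| - 2b \geq \varepsilon\right\} \supseteq \left\{x\in\mathbb{I}:\frac{\log q_n(x)}{n} \geq b + \frac{\varepsilon}{2} + \delta\right\},
\]
so by the liminf half of Theorem \ref{dayu},
\[
\liminf_{n\to\infty}\frac1n\log\lambda\left\{x:\tfrac1n\log|(T^n)'(x)| - 2b \geq \varepsilon\right\} \geq -2\log b_{\varepsilon/2+\delta} - \log 3.
\]
Here the integer-valued quantity $b_\varepsilon$ is right-continuous in $\varepsilon$, so for $\delta>0$ small enough $b_{\varepsilon/2+\delta}=b_{\varepsilon/2}$ (any $\varepsilon$ for which $e^{b+\varepsilon/2}$ is an integer can be handled by noting the bound is monotone and taking a one-sided limit, or simply absorbed since the statement only claims $\geq -2\log b_{\varepsilon/2}-\log 3$); letting $\delta\to 0$ yields the claim. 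The main obstacle — really the only subtlety — is this interchange of the $\delta\to 0$ limit with the discontinuous-looking constants: one must check that $\theta_1$ is genuinely continuous (it is, being a composition of analytic functions) and that the jump points of $\varepsilon\mapsto b_\varepsilon$ do not spoil the liminf bound, which they do not because that bound is a decreasing step function of $\varepsilon$ and we are taking $\delta\downarrow 0$. Everything else is the elementary squeeze above together with quoting the two theorems.
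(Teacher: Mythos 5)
Your route is exactly the one the paper intends: the paper offers no separate argument for this theorem beyond combining the sandwich $\tfrac{1}{2K}q_n^2(x)\le |(T^n)'(x)|\le Kq_n^2(x)$ with Theorem \ref{dayu}, and your limsup half (inclusion into $\{\log q_n(x)/n\ge b+\varepsilon/2-\delta\}$ for large $n$, apply the limsup part of Theorem \ref{dayu}, then let $\delta\downarrow 0$ using continuity of $\theta_1$, which indeed follows from $\theta_1$ being an infimum of affine functions of $\varepsilon$, or from the closed form via $\tau$) is a complete and correct implementation of that.

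The one genuine weak spot is your handling of the liminf at exceptional $\varepsilon$. First, $b_\varepsilon=\lceil e^{b+\varepsilon}\rceil$ is \emph{left}-continuous in $\varepsilon$, not right-continuous: if $e^{b+\varepsilon/2}$ is an integer (which does happen, e.g.\ $\varepsilon=2(\log k-b)$ for an integer $k\ge 4$), then $b_{\varepsilon/2+\delta}=b_{\varepsilon/2}+1$ for every $\delta>0$, so your $\delta\downarrow 0$ limit only yields $-2\log(b_{\varepsilon/2}+1)-\log 3$, strictly weaker than the claimed $-2\log b_{\varepsilon/2}-\log 3$; neither ``taking a one-sided limit'' nor ``absorbing'' recovers the stated constant in that case. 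The clean fix is to bypass Theorem \ref{dayu} as a black box and rerun its cylinder argument directly: on $I(b_{\varepsilon/2},\dots,b_{\varepsilon/2})$ the recursion $q_n=b_{\varepsilon/2}q_{n-1}+q_{n-2}$ gives $q_n\ge c^{n-1}$ with $c=\bigl(b_{\varepsilon/2}+\sqrt{b_{\varepsilon/2}^2+4}\bigr)/2>b_{\varepsilon/2}\ge e^{b+\varepsilon/2}$, so
\[
\frac{1}{n}\log|(T^n)'(x)|\ \ge\ \frac{2\log q_n(x)}{n}-\frac{\log(2K)}{n}\ \ge\ 2b+\varepsilon
\]
for all sufficiently large $n$; hence the cylinder eventually lies inside the event, and Lemma \ref{Khin57} bounds its measure below by $(3b_{\varepsilon/2}^2)^{-n}$, which is exactly the stated liminf bound. (The strict margin $c>b_{\varepsilon/2}$ is what absorbs the $\log(2K)/n$ term; in Theorem \ref{dayu} itself the non-strict bound $q_n\ge b_\varepsilon^n$ sufficed because no such error term was present.) For all $\varepsilon$ with $e^{b+\varepsilon/2}\notin\mathbb{N}$ your argument as written is fine.
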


\begin{theorem}
For any $0< \varepsilon \leq 2b$,
\begin{equation*}
\limsup_{n \to \infty} \frac{1}{n}\log\lambda\left\{x\in [0,1):\frac{1}{n}\log |(T^n)^{\prime}(x)| - 2b \leq -\varepsilon\right\} \leq \theta_2(\varepsilon/2)
\end{equation*}
and for any $0< \varepsilon \leq 2(b -\log2)$,
\begin{equation*}
\liminf_{n \to \infty} \frac{1}{n}\log \lambda\left\{x\in [0,1):\frac{1}{n}\log |(T^n)^{\prime}(x)| - 2b \leq -\varepsilon\right\} \geq - 2\log b^\ast_{\varepsilon/2}- \log 3.
\end{equation*}
\end{theorem}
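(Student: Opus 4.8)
The plan is to derive both inequalities from Theorem \ref{xiaoyu} by transporting the event from $|(T^n)'(x)|$ to $q_n(x)$. The only ingredient is the uniform two-sided comparison recalled just above: there is a constant $K\ge 1$ with $\tfrac{1}{2K}q_n^2(x)\le |(T^n)'(x)|\le Kq_n^2(x)$ for all $x\in[0,1)$ and all $n$. Dividing by $n$ and taking logarithms, this says $\tfrac1n\log|(T^n)'(x)|=\tfrac2n\log q_n(x)+O(1/n)$ uniformly in $x$, so the set $\{x:\tfrac1n\log|(T^n)'(x)|-2b\le -\varepsilon\}$ agrees with $\{x:\tfrac{\log q_n(x)}{n}\le b-\varepsilon/2\}$ up to an $o(1)$ movement of the threshold; the whole proof is the bookkeeping of that movement.

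For the limsup part I would use $|(T^n)'(x)|\ge \tfrac{1}{2K}q_n^2(x)$ to get the inclusion
\[
\Big\{x\in\mathbb{I}:\tfrac1n\log|(T^n)'(x)|-2b\le -\varepsilon\Big\}\ \subseteq\ \Big\{x\in\mathbb{I}:\tfrac{\log q_n(x)}{n}\le b-\tfrac{\varepsilon}{2}+\tfrac{\log(2K)}{2n}\Big\}.
\]
Fixing a small $\delta>0$, for all large $n$ the right-hand set lies inside $\{x:\tfrac{\log q_n(x)}{n}\le b-(\varepsilon/2-\delta)\}$, and since $0<\varepsilon/2-\delta\le b$ when $\delta$ is small (this is where $0<\varepsilon\le 2b$ enters), the limsup part of Theorem \ref{xiaoyu} gives $\limsup_n\tfrac1n\log\lambda\{\cdots\}\le\theta_2(\varepsilon/2-\delta)$. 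Letting $\delta\downarrow 0$ and using continuity of $\theta_2$ — it is concave, being an infimum of affine functions of the parameter, hence continuous on the interior of its domain, and by Remark \ref{2} it is even real-analytic on $(0,\,b-\log((\sqrt{5}+1)/2))$ — yields the desired bound $\theta_2(\varepsilon/2)$. In the remaining range, where $\theta_2(\varepsilon/2)=-\infty$, the conclusion is still inherited from Theorem \ref{xiaoyu}, because there the companion $q_n$-event is eventually empty (since $\tfrac{\log q_n(x)}{n}\ge \tfrac1n\log q_n([1,\cdots,1])\to\log((\sqrt{5}+1)/2)$).

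For the liminf part, since a lower bound on measure is wanted, I would not invoke Theorem \ref{xiaoyu} as a black box but replay its cylinder construction, which stays valid up to the endpoint $\varepsilon=2(b-\log 2)$. Set $m:=b^{\ast}_{\varepsilon/2}$, the largest integer $\le e^{b-\varepsilon/2}-1$; the hypothesis $0<\varepsilon\le 2(b-\log 2)$ is precisely what forces $m\ge 1$. On the cylinder $I(\underbrace{m,\cdots,m}_n)$ the denominator $q_n$ is constant, with $\tfrac1n\log q_n\to\log\tfrac{m+\sqrt{m^2+4}}{2}<\log(m+1)\le b-\varepsilon/2$, so there is a fixed gap $\rho>0$ with $\tfrac1n\log q_n\le b-\varepsilon/2-\rho$ on that cylinder for all large $n$. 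Then $|(T^n)'(x)|\le Kq_n^2(x)\le Ke^{n(2b-\varepsilon-2\rho)}$ there, hence $\tfrac1n\log|(T^n)'(x)|-2b\le -\varepsilon-2\rho+\tfrac{\log K}{n}\le -\varepsilon$ once $n$ is large. Thus $I(\underbrace{m,\cdots,m}_n)$ sits inside the set under study, and iterating Lemma \ref{Khin57} exactly as in the proof of Theorem \ref{xiaoyu} gives $|I(\underbrace{m,\cdots,m}_n)|\ge(3m^2)^{-n}$, whence $\liminf_n\tfrac1n\log\lambda\{\cdots\}\ge -2\log m-\log 3=-2\log b^{\ast}_{\varepsilon/2}-\log 3$.

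The only genuine obstacle is minor: the constant $K$ contributes a vanishing but not obviously signed shift of the threshold, so one cannot literally substitute $\varepsilon/2$ into Theorem \ref{xiaoyu}. This is absorbed by the $\delta$-perturbation together with continuity of $\theta_2$ for the limsup, and by the fixed spectral gap $\rho$ in the explicit cylinder estimate for the liminf; everything else is a transcription of the proof of Theorem \ref{xiaoyu}.
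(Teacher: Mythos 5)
Your proof is correct and takes the same route the paper intends: the paper deduces this theorem directly from Theorem \ref{xiaoyu} via the comparison $\frac{1}{2K}q_n^2(x)\le |(T^n)^{\prime}(x)|\le Kq_n^2(x)$, giving no further details. Your $\delta$-perturbation with continuity (or emptiness) for the limsup, and the replayed cylinder estimate for the liminf (which is indeed needed to reach the endpoint $\varepsilon=2(b-\log 2)$, where a black-box use of Theorem \ref{xiaoyu} would fail), merely supply the bookkeeping for the constant $K$ that the paper leaves implicit.
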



\begin{theorem}
For any $\varepsilon > 0$, there exist the constants $A_1, B_1> 0$ and $\alpha_1, \beta_1 > 0$ (both only depending on $\varepsilon$)such that for all $n \geq 1$, we have
\begin{equation*}
B_1e^{-\beta_1 n} \leq \lambda \left\{x \in [0,1): \left|\frac{1}{n}\log |(T^n)^{\prime}(x)| - \frac{\pi^2}{6\log2}\right| \geq \varepsilon\right\} \leq A_1e^{-\alpha_1 n}.
\end{equation*}
\end{theorem}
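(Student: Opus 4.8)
The plan is to reduce this statement to Theorem \ref{Large deviations} using the two‑sided comparison $\tfrac{1}{2K}q_n^2(x)\le |(T^n)'(x)|\le Kq_n^2(x)$ recorded at the beginning of this subsection, and to deal with the (finitely many) small values of $n$ by hand.

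First I would note that this comparison gives, for every $x\in[0,1)$ and every $n\ge 1$,
\[
\left|\frac1n\log|(T^n)'(x)|-\frac2n\log q_n(x)\right|\le\frac{\log(2K)}{n}=:\delta_n,\qquad \delta_n\to0,
\]
so that, writing $\tfrac1n\log|(T^n)'(x)|-2b=2\bigl(\tfrac{\log q_n(x)}{n}-b\bigr)+e_n(x)$ with $|e_n(x)|\le\delta_n$ and recalling $2b=\pi^2/(6\log2)$, the deviation set for the Lyapunov exponent is squeezed between two deviation sets for $\log q_n(x)/n$. For the upper bound, fix $N_1$ with $\delta_{N_1}\le\varepsilon/2$; then for $n\ge N_1$ the inequality $\bigl|\tfrac1n\log|(T^n)'(x)|-2b\bigr|\ge\varepsilon$ forces $\bigl|\tfrac{\log q_n(x)}{n}-b\bigr|\ge\varepsilon/4$, whence
\[
\lambda\left\{x:\left|\tfrac1n\log|(T^n)'(x)|-2b\right|\ge\varepsilon\right\}\le\lambda\left\{x:\left|\tfrac{\log q_n(x)}{n}-b\right|\ge\tfrac\varepsilon4\right\}\le A\,e^{-\alpha n}
\]
by the upper bound of Theorem \ref{Large deviations} applied with $\varepsilon/4$; the finitely many values $1\le n<N_1$ are absorbed by enlarging the constant to $A_1$, since those probabilities are at most $1$.

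For the lower bound, fix $N_2$ with $\delta_{N_2}\le\varepsilon$; then for $n\ge N_2$ the inclusion $\{x:|\tfrac{\log q_n(x)}{n}-b|\ge\varepsilon\}\subseteq\{x:|\tfrac1n\log|(T^n)'(x)|-2b|\ge\varepsilon\}$ holds, so the lower bound of Theorem \ref{Large deviations} gives the estimate $B\,e^{-\beta n}$. For the remaining $1\le n<N_2$ one only has to know that the deviation set on the left has positive Lebesgue measure: on a cylinder $I(a_1,\dots,a_n)$ with $a_1$ large one has $q_n\ge a_1$, hence $|(T^n)'(x)|\ge a_1^2/(2K)$ and $\tfrac1n\log|(T^n)'(x)|-2b\ge\varepsilon$ throughout that cylinder, which has positive measure. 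Shrinking $B$ to $B_1$ and/or enlarging $\beta$ to $\beta_1$ to accommodate these finitely many cases completes the argument, with all constants depending only on $\varepsilon$ through $N_1$, $N_2$ and $K$. The proof uses no new ideas beyond Theorem \ref{Large deviations}; the only point needing a little care is this positivity of measure for the small values of $n$ in the lower bound, and the routine juggling of the $\delta_n$ error term so that it is eventually dominated by a fixed fraction of $\varepsilon$.
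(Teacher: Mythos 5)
Your proposal is correct and follows essentially the same route as the paper, which simply derives this theorem from the comparison $\frac{1}{2K}q_n^2(x)\le |(T^n)'(x)|\le Kq_n^2(x)$ together with the large deviation bounds for $\log q_n(x)/n$; you merely make explicit the routine bookkeeping (the $\delta_n=\log(2K)/n$ error term and the finitely many small $n$, where your positive-measure cylinder argument for the lower bound is the right fix) that the paper leaves implicit.
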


\subsection{The growth rate of Diophantine approximation}
For any $x \in [0,1)$ with its continued fraction expansion (\ref{continued fraction expansion}), we define
\[
d(x): = \lim\limits_{n \to \infty}\frac{1}{n}\log \left|x - \frac{p_n(x)}{q_n(x)}\right|
\]
if the limit exists. It is clear to see that this function stands for the rate of rational numbers approximating to real numbers. By Theorem \ref{Levy theorem} and Diophantine inequalities (\ref{diophantine}), we know that the quantity $d(x)= -\pi^2/(6\log2)$ for $\lambda$-almost all $x \in [0,1)$. In view of (\ref{diophantine}), we obtain that
\begin{theorem}
For any $0< \varepsilon\leq 2b $,
\begin{equation*}
\limsup_{n \to \infty} \frac{1}{n}\log \lambda\left\{x\in [0,1):\frac{1}{n}\log \left|x - \frac{p_n(x)}{q_n(x)}\right| +2b \geq \varepsilon\right\} \leq \theta_2(\varepsilon/2)
\end{equation*}
 and for any $0< \varepsilon\leq 2(b-\log 2)$,
\begin{equation*}
\liminf_{n \to \infty} \frac{1}{n}\log \lambda\left\{x\in [0,1):\frac{1}{n}\log \left|x - \frac{p_n(x)}{q_n(x)}\right| + 2b \geq \varepsilon\right\} \geq -2\log b^\ast_{\varepsilon/2}-\log 3.
\end{equation*}
\end{theorem}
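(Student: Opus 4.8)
The plan is to reduce the statement about $|x - p_n(x)/q_n(x)|$ directly to Theorem~\ref{xiaoyu} via the Diophantine inequalities \eqref{diophantine}. From \eqref{diophantine} we have $q_n^{-2}(x) \geq |x - p_n(x)/q_n(x)| \geq (2q_{n+1}^2(x))^{-1}$, so taking logarithms and dividing by $n$,
\[
-\frac{2\log q_n(x)}{n} \geq \frac{1}{n}\log\left|x-\frac{p_n(x)}{q_n(x)}\right| \geq -\frac{2\log q_{n+1}(x)}{n} - \frac{\log 2}{n}.
\]
First I would observe that the event $\{\frac{1}{n}\log|x-p_n(x)/q_n(x)| + 2b \geq \varepsilon\}$ is, by the left inequality above, contained in the event $\{-\frac{2\log q_n(x)}{n} + 2b \geq \varepsilon\} = \{\frac{\log q_n(x)}{n} \leq b - \varepsilon/2\}$. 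This containment gives the \emph{limsup} bound immediately: the Lebesgue measure of the $d$-event is at most that of the $q_n$-event, so
\[
\limsup_{n\to\infty}\frac{1}{n}\log\lambda\left\{\tfrac{1}{n}\log\left|x-\tfrac{p_n}{q_n}\right|+2b\geq\varepsilon\right\}\leq\limsup_{n\to\infty}\frac{1}{n}\log\lambda\left\{\tfrac{\log q_n(x)}{n}\leq b-\tfrac{\varepsilon}{2}\right\}\leq\theta_2(\varepsilon/2),
\]
using the limsup part of Theorem~\ref{xiaoyu} with parameter $\varepsilon/2$; the hypothesis $0<\varepsilon\leq 2b$ translates exactly to $0<\varepsilon/2\leq b$, which is the range where that part of Theorem~\ref{xiaoyu} applies.

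For the \emph{liminf} bound I would go in the reverse direction, producing an explicit subset of the $d$-event whose measure decays no faster than $(3 b_{\varepsilon/2}^{\ast 2})^{-n}$. The cleanest route is to reuse the cylinder construction from the liminf proof of Theorem~\ref{xiaoyu}: on the cylinder $I(\underbrace{b^\ast_{\varepsilon/2},\dots,b^\ast_{\varepsilon/2}}_{n+1})$ one has $q_k(x) \leq (b^\ast_{\varepsilon/2}+1)^k \leq e^{(b-\varepsilon/2)k}$ for all $k \leq n+1$, in particular $\frac{2\log q_{n+1}(x)}{n} \leq \frac{2(n+1)(b-\varepsilon/2)}{n} = 2(b-\varepsilon/2) + \frac{2(b-\varepsilon/2)}{n}$, so the right inequality in the display above shows $\frac{1}{n}\log|x-p_n/q_n| + 2b \geq \varepsilon - \frac{2(b-\varepsilon/2)}{n} - \frac{\log 2}{n}$, which is $\geq \varepsilon - o(1)$; a harmless shrinkage of $\varepsilon$ (or a refinement of $b^\ast$) absorbs the vanishing error. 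Then $\lambda$ of this cylinder is, by iterating Lemma~\ref{Khin57} exactly as in Theorem~\ref{xiaoyu}, bounded below by $(3 b_{\varepsilon/2}^{\ast 2})^{-(n+1)}$ up to a constant, giving $\liminf \frac{1}{n}\log\lambda\{\cdots\} \geq -2\log b^\ast_{\varepsilon/2} - \log 3$. The constraint $0<\varepsilon\leq 2(b-\log 2)$ is what guarantees $b^\ast_{\varepsilon/2}\geq 1$, i.e. that the cylinder is nonempty, mirroring the corresponding hypothesis in Theorem~\ref{xiaoyu}.

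The main obstacle is the bookkeeping around the $q_{n+1}$ versus $q_n$ mismatch in the lower Diophantine bound: the liminf event is naturally controlled by $\log q_{n+1}/n$, not $\log q_n/n$, so one must either work with an $(n+1)$-fold cylinder and track that the extra $\frac{1}{n}$-order terms are negligible, or phrase the inclusion slightly more carefully. This is genuinely routine — the error terms are $O(1/n)$ and disappear after taking $\frac1n\log(\cdot)$ and $\liminf$ — but it is the one place where the proof is not a verbatim transcription of Theorem~\ref{xiaoyu}. Everything else is a direct quotation of \eqref{diophantine} together with the two halves of Theorem~\ref{xiaoyu} at the rescaled parameter $\varepsilon/2$.
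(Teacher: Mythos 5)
Your reduction is exactly the one the paper uses (the paper simply invokes the inequalities (\ref{diophantine}) together with Theorem \ref{xiaoyu} and gives no further details), and your limsup half is complete and correct: the containment $\{\frac1n\log|x-p_n/q_n|+2b\ge\varepsilon\}\subseteq\{\frac{\log q_n(x)}{n}\le b-\varepsilon/2\}$ plus the limsup part of Theorem \ref{xiaoyu} at parameter $\varepsilon/2$ is all that is needed.

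The liminf half, however, has a soft spot that you half-acknowledge but do not actually close. The $O(1/n)$ defect is not an error term in the quantity $\frac1n\log\lambda(\cdot)$, where it would indeed vanish in the limit; it sits inside the inequality defining the event, so your computation only shows that the $(n+1)$-cylinder lies in the event with $\varepsilon$ replaced by $\varepsilon-O(1/n)$, which by itself gives no lower bound on $\lambda\{\frac1n\log|x-p_n/q_n|+2b\ge\varepsilon\}$, and the justification ``the error disappears after taking $\frac1n\log$ and liminf'' is therefore not the right reason. Your first proposed repair, running the construction at some $\varepsilon'>\varepsilon$ and using $b^\ast_{\varepsilon'/2}\le b^\ast_{\varepsilon/2}$, does work whenever $\varepsilon<2(b-\log 2)$, but it breaks down exactly at the admissible endpoint $\varepsilon=2(b-\log 2)$: there $e^{b-\varepsilon/2}-1=1$, so $b^\ast_{\varepsilon'/2}=0$ for every $\varepsilon'>\varepsilon$ and no cylinder is available. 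A clean fix that covers all admissible $\varepsilon$ is to use that on the periodic cylinder $I(b^\ast_{\varepsilon/2},\dots,b^\ast_{\varepsilon/2})$ the crude bound $q_k\le(b^\ast_{\varepsilon/2}+1)^k$ is never sharp: the recursion $q_k=b^\ast_{\varepsilon/2}q_{k-1}+q_{k-2}$ gives $q_k\le A\rho^k$ with $\rho=\bigl(b^\ast_{\varepsilon/2}+\sqrt{(b^\ast_{\varepsilon/2})^2+4}\,\bigr)/2<b^\ast_{\varepsilon/2}+1\le e^{b-\varepsilon/2}$, a strict inequality; combined with $|x-p_n/q_n|\ge T^nx/(2q_n^2)\ge 1/\bigl(2(b^\ast_{\varepsilon/2}+1)q_n^2\bigr)$ this shows the $(n+1)$-cylinder is contained in the event with the original $\varepsilon$ for all large $n$, and the measure estimate via Lemma \ref{Khin57}, exactly as you set it up, then yields the stated bound $-2\log b^\ast_{\varepsilon/2}-\log 3$. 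With this (or with your shrinking argument for $\varepsilon<2(b-\log 2)$ and the endpoint treated separately) the proof is complete.
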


\begin{theorem}
For any $\varepsilon>0$,
\begin{equation*}
\limsup_{n \to \infty} \frac{1}{n}\log \lambda\left\{x\in [0,1):\frac{1}{n}\log \left|x - \frac{p_n(x)}{q_n(x)}\right| + 2b \leq -\varepsilon\right\} \leq \theta_1(\varepsilon/2)
\end{equation*}
and
\begin{equation*}
\liminf_{n \to \infty} \frac{1}{n}\log \lambda\left\{x\in [0,1):\frac{1}{n}\log \left|x - \frac{p_n(x)}{q_n(x)}\right| + 2b \leq -\varepsilon\right\} \geq -2\log b_{\varepsilon/2}-\log 3.
\end{equation*}
\end{theorem}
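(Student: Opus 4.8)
The plan is to reduce both estimates to Theorem \ref{dayu} via the Diophantine inequalities (\ref{diophantine}). Taking logarithms in (\ref{diophantine}) gives, for every $x \in \mathbb{I}$ and every $n \geq 1$,
\[
-2\log q_{n+1}(x) - \log 2 \ \leq\ \log\left|x - \frac{p_n(x)}{q_n(x)}\right| \ \leq\ -2\log q_n(x).
\]
Dividing by $n$ and adding $2b$, the upper estimate becomes
\[
\frac{1}{n}\log\left|x - \frac{p_n(x)}{q_n(x)}\right| + 2b \ \leq\ -2\left(\frac{\log q_n(x)}{n} - b\right),
\]
so the set $\left\{x \in \mathbb{I}: \frac{\log q_n(x)}{n} \geq b + \varepsilon/2\right\}$ is contained in the set whose measure we must bound from below. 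Hence the liminf part follows at once from the liminf part of Theorem \ref{dayu} applied with $\varepsilon/2$ in place of $\varepsilon$, which yields exactly $-2\log b_{\varepsilon/2} - \log 3$ (and this quantity is $\leq \theta_1(\varepsilon/2)$ by the elementary computation at the end of the proof of Theorem \ref{dayu}, so the two estimates are mutually consistent).

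For the limsup part I would use the lower estimate $|x - p_n(x)/q_n(x)| \geq (2q_{n+1}^2(x))^{-1}$: if $\frac{1}{n}\log|x - p_n(x)/q_n(x)| + 2b \leq -\varepsilon$ then, after dividing by $n$ and rearranging,
\[
\frac{\log q_{n+1}(x)}{n+1} \ \geq\ \frac{n}{n+1}\left(b + \frac{\varepsilon}{2} - \frac{\log 2}{2n}\right).
\]
Fix $0 < \eta < \varepsilon/2$. Since the right-hand side tends to $b + \varepsilon/2$ as $n \to \infty$, for all large $n$ it exceeds $b + \varepsilon/2 - \eta$, so the set in the theorem is contained in $\left\{x \in \mathbb{I}: \frac{\log q_{n+1}(x)}{n+1} \geq b + \varepsilon/2 - \eta\right\}$. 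Taking $\frac{1}{n}\log\lambda(\cdot)$, writing $\frac{1}{n} = \frac{n+1}{n}\cdot\frac{1}{n+1}$ and using $\frac{n+1}{n} \to 1$ together with the limsup part of Theorem \ref{dayu} (with $\varepsilon/2 - \eta$ in place of $\varepsilon$) gives
\[
\limsup_{n \to \infty} \frac{1}{n}\log\lambda\left\{x \in [0,1): \frac{1}{n}\log\left|x - \frac{p_n(x)}{q_n(x)}\right| + 2b \leq -\varepsilon\right\} \ \leq\ \theta_1\!\left(\frac{\varepsilon}{2} - \eta\right).
\]
Letting $\eta \to 0^+$ and invoking the continuity of $\theta_1$ — by Remark \ref{1}, $\theta_1(\delta) = 2(b + \delta)(\tau(b + \delta) - 1)$ with $\tau$ real-analytic near $b + \varepsilon/2 > \gamma_0$ — yields the bound $\theta_1(\varepsilon/2)$.

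The argument is thus a direct translation, and the one point requiring care is the index shift $n \mapsto n+1$ forced by the $q_{n+1}$ appearing on the left of (\ref{diophantine}): the vanishing corrections $\frac{\log 2}{2n}$ and the factor $\frac{n}{n+1}$ must be absorbed through the auxiliary parameter $\eta$ and the continuity of $\theta_1(\cdot)$, after which nothing beyond Theorem \ref{dayu} is needed. Since all the set inclusions used are monotone, no mixing or independence input enters, exactly as in the proofs of Theorems \ref{dayu} and \ref{xiaoyu}.
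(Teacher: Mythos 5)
Your proposal is correct and follows essentially the same route the paper intends: Section 4 derives this theorem directly from the Diophantine inequalities (\ref{diophantine}) together with Theorem \ref{dayu}, which is exactly your reduction. The only difference is that you spell out the details the paper leaves implicit -- the set inclusions, the index shift from $q_{n+1}$, and the absorption of the $\tfrac{\log 2}{2n}$ and $\tfrac{n+1}{n}$ corrections via $\eta$ and the continuity of $\theta_1(\cdot)$ -- all of which are handled correctly.
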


\begin{theorem}\label{Diophantine}
For any $\varepsilon > 0$, there exist the constants $A_2, B_2> 0$ and $\alpha_2, \beta_2 > 0$ (both only depending on $\varepsilon$) such that for all $n \geq 1$, we have
\begin{equation*}
B_2e^{-\beta_2 n} \leq \lambda \left\{x \in [0,1):\left|\frac{1}{n}\log\left|x - \frac{p_n(x)}{q_n(x)}\right| + \frac{\pi^2}{6\log2}\right| \geq \varepsilon\right\} \leq A_2e^{-\alpha_2 n}.
\end{equation*}
\end{theorem}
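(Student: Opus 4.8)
The plan is to reduce Theorem \ref{Diophantine} to the exponential estimates for $\log q_n(x)/n$ already in hand, exactly as Theorem \ref{Large deviations} was deduced from Theorems \ref{dayu} and \ref{xiaoyu}. Write $d_n(x):=\tfrac1n\log|x-p_n(x)/q_n(x)|$. Taking logarithms in the Diophantine inequalities (\ref{diophantine}) and dividing by $n$ gives, for every irrational $x$,
\[
-\frac{2}{n}\log q_{n+1}(x)-\frac{\log 2}{n}\;\le\; d_n(x)\;\le\;-\frac{2}{n}\log q_n(x),
\]
so $d_n(x)$ is trapped between $-2$ times a quantity comparable to $\log q_{n+1}/n$ and $-2$ times $\log q_n/n$. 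Since $\tfrac1n\log q_{n+1}=\tfrac{n+1}{n}\cdot\tfrac1{n+1}\log q_{n+1}$ with $\tfrac{n+1}{n}\to1$, the event $\{|d_n+2b|\ge\varepsilon\}$ is, up to a harmless reindexing $m=n+1$ and a small shrinking of the deviation parameter, contained in the union of the event that $\log q_n/n$ falls below $b-\varepsilon/2$ and the event that $\log q_m/m$ rises above $b+\varepsilon/2$. Note also that $|x-p_n/q_n|\le q_n^{-1}q_{n+1}^{-1}<1$ for all $n\ge1$, so $d_n(x)<0$ always.

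\emph{Upper bound.} Split $\lambda\{|d_n+2b|\ge\varepsilon\}$ as the sum of $\lambda\{d_n\le-2b-\varepsilon\}$ and $\lambda\{d_n\ge-2b+\varepsilon\}$, as in (\ref{fenkai}). For the first summand the left inequality above forces $\tfrac1n\log q_{n+1}(x)\ge b+\tfrac\varepsilon2-\tfrac{\log 2}{2n}$, hence $\tfrac1{n+1}\log q_{n+1}(x)\ge b+\tfrac\varepsilon4$ for all large $n$; reindexing $m=n+1$ and invoking the limsup part of Theorem \ref{dayu} (with parameter $\varepsilon/4$) shows this summand decays exponentially at a rate bounded above by something strictly less than $0$ since $\theta_1(\varepsilon/4)<0$ (Remark \ref{1}). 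For the second summand: if $\varepsilon\ge2b$ then $\{d_n\ge-2b+\varepsilon\}\subseteq\{d_n\ge0\}=\emptyset$; if $0<\varepsilon<2b$ the right inequality forces $\tfrac1n\log q_n(x)\le b-\tfrac\varepsilon2$, and the limsup part of Theorem \ref{xiaoyu} gives exponential decay at a rate $\le\theta_2(\varepsilon/2)<0$. Therefore $\limsup_{n\to\infty}\tfrac1n\log\lambda\{|d_n+2b|\ge\varepsilon\}<0$; choosing $\alpha_2$ strictly below this limsup yields $\lambda\{|d_n+2b|\ge\varepsilon\}\le e^{-\alpha_2 n}$ for all large $n$, and enlarging the constant to a suitable $A_2$ absorbs the finitely many remaining $n$, precisely as in the final step of the proof of Theorem \ref{Large deviations}.

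\emph{Lower bound.} It suffices to bound $\lambda\{d_n\le-2b-\varepsilon\}$ from below by a single cylinder. Let $N=N_\varepsilon$ be the smallest integer no less than $e^{b+\varepsilon/2}$ (this is $\ge2$ since $b>\log2$). On the cylinder $I(\underbrace{N,\dots,N}_{n+1})$ the recursion (\ref{recursive}) gives $N^k\le q_k\le(N+1)^k$ for $0\le k\le n+1$, hence $|x-p_n(x)/q_n(x)|\le q_n^{-1}q_{n+1}^{-1}\le N^{-(2n+1)}$, so $d_n(x)\le-\tfrac{2n+1}{n}\log N\le-2\log N\le-2b-\varepsilon$ for \emph{every} $x$ in this cylinder. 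Applying Lemma \ref{Khin57} $n$ times, exactly as in the liminf parts of Theorems \ref{dayu} and \ref{xiaoyu}, gives $\lambda\big(I(\underbrace{N,\dots,N}_{n+1})\big)\ge(3N^2)^{-n}\,|I(N)|\ge(3N^2)^{-(n+1)}$. Consequently $\lambda\{|d_n+2b|\ge\varepsilon\}\ge B_2 e^{-\beta_2 n}$ for all $n\ge1$ with $B_2=(3N^2)^{-1}$ and $\beta_2=\log3+2\log N>0$.

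The only point requiring any care — and it is a mild one — is the appearance of $q_{n+1}$ rather than $q_n$ on the left of the Diophantine bound: feeding this into Theorem \ref{dayu} demands the $(n+1)$-normalization $\tfrac1{n+1}\log q_{n+1}$, which differs from $\tfrac1n\log q_{n+1}$ by the factor $\tfrac{n+1}{n}$, and one must also discard the error term $-\tfrac{\log2}{2n}$. Both losses are absorbed by replacing $\varepsilon/2$ with a slightly smaller deviation parameter, which is legitimate precisely because $\theta_1(\cdot)$ and $\theta_2(\cdot)$ are strictly negative on $(0,\infty)$ (Remarks \ref{1} and \ref{2}). Beyond this reindexing, the argument is a routine transcription of the proofs of Theorems \ref{dayu}, \ref{xiaoyu} and \ref{Large deviations}, with no new ingredient needed.
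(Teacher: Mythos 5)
Your proposal is correct and follows essentially the route the paper intends: transfer the deviation events for $\frac{1}{n}\log\left|x-\frac{p_n(x)}{q_n(x)}\right|$ to deviation events for $\log q_n(x)/n$ (and $\log q_{n+1}(x)/(n+1)$) via the inequalities (\ref{diophantine}), apply the limsup parts of Theorems \ref{dayu} and \ref{xiaoyu} together with the splitting-and-absorbing argument from the proof of Theorem \ref{Large deviations} for the upper bound, and use a constant-partial-quotient cylinder estimated by Lemma \ref{Khin57} for the lower bound. The only remarks are cosmetic: since $q_{n+1}\geq q_n$ one could use $|x-p_n/q_n|\leq q_n^{-2}$ and an $n$-entry cylinder to simplify the lower bound, and a limiting argument in the deviation parameter would recover the sharper rate $\theta_1(\varepsilon/2)$ instead of $\theta_1(\varepsilon/4)$, but neither is needed for the stated result.
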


\subsection{The growth rate of the length of cylinders}
In dynamical system, the theorem of Shannon-Mcmillan-Breiman (see \cite[Theorem 6.2.1]{lesDK02}) states that for every generating partition on an ergodic system of finite entropy, the exponential decay rate of the measure of cylinder sets equals the metric entropy almost everywhere. Now we consider the continued fractions dynamical system $([0,1), \mathcal{B}, T, \mu)$, where $\mathcal{B}$ is the Borel $\sigma$-algebra on $[0,1)$ and $\mu$ is the \emph{Gauss measure} with a bounded density $\frac{1}{(1+x) \log 2}$ on $[0,1)$ with respect to Lebesgue measure. For any $x \in [0,1)$, we put
\[
s(x): = \lim\limits_{n \to \infty}\frac{1}{n}\log \mu(I_n(x))
\]
if the limit exists, where $I_n(x)$ denotes the $n$-th order cylinder containing $x$.
It is clear to see that $s(x)$ is alternatively defined by $s(x) = \lim_{n \to \infty} (\log |I_n(x)|)/n$ because of the relation between Gauss measure and Lebesgue measure. Shannon-Mcmillan-Breiman's theorem yields that $s(x)$ exits and is equal to $-\pi^2/(6\log2)$ for $\lambda$-almost all $x \in [0,1)$.
Notice that
\[
\frac{1}{2q^2_n(x)} \leq |I_n(x)| = \frac{1}{q_n(x)(q_n(x)+q_{n-1}(x))} \leq \frac{1}{q^2_n(x)}
\]
(see \cite{lesDK02, lesIK02}), in view of Theorems \ref{dayu}, \ref{xiaoyu} and \ref{Large deviations}, we have

\begin{theorem}
For any $0<\varepsilon \leq 2b$,
\begin{equation*}
\limsup_{n \to \infty} \frac{1}{n}\log \lambda\left\{x\in [0,1):\frac{1}{n}\log |I_n(x)| + 2b \geq \varepsilon\right\} \leq \theta_2(\varepsilon/2)
\end{equation*}
 and for any $0<\varepsilon \leq 2(b -\log 2)$,
\begin{equation*}
\liminf_{n \to \infty} \frac{1}{n}\log \lambda\left\{x\in [0,1):\frac{1}{n}\log |I_n(x)| + 2b \geq \varepsilon\right\} \geq -2\log b^\ast_{\varepsilon/2}-\log 3.
\end{equation*}
\end{theorem}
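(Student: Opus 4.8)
The plan is to compare $S_n:=\bigl\{x\in\mathbb{I}:\tfrac1n\log|I_n(x)|+2b\ge\varepsilon\bigr\}$ with the corresponding events for $q_n(x)$ treated in Theorem~\ref{xiaoyu}, using the elementary two-sided bound
\[
\frac{1}{2q_n^2(x)}\le |I_n(x)|=\frac{1}{q_n(x)\bigl(q_n(x)+q_{n-1}(x)\bigr)}\le\frac{1}{q_n^2(x)}
\]
recorded just before the statement; as usual, replacing $[0,1)$ by $\mathbb{I}$ costs only a Lebesgue-null set.

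For the limsup part one needs only the upper estimate $|I_n(x)|\le q_n^{-2}(x)$, which gives $\tfrac1n\log|I_n(x)|\le-\tfrac2n\log q_n(x)$; hence $x\in S_n$ forces $\tfrac1n\log q_n(x)\le b-\varepsilon/2$, so
\[
S_n\subseteq\Bigl\{x\in\mathbb{I}:\tfrac1n\log q_n(x)\le b-\tfrac{\varepsilon}{2}\Bigr\}.
\]
Since $0<\varepsilon/2\le b$ here, applying the limsup part of Theorem~\ref{xiaoyu} with $\varepsilon/2$ in place of $\varepsilon$ bounds the exponential rate of $\lambda(S_n)$ by $\theta_2(\varepsilon/2)$, which is the first assertion.

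For the liminf part I would, exactly as in the proof of the liminf part of Theorem~\ref{xiaoyu}, trap inside $S_n$ a single cylinder of order $n$ with constant digits and estimate its measure by Lemma~\ref{Khin57}. Put $a:=b^\ast_{\varepsilon/2}$; the hypothesis $0<\varepsilon\le2(b-\log2)$ gives $b-\varepsilon/2\ge\log2$, hence $a\ge1$, while by definition $a+1\le e^{b-\varepsilon/2}$. On the cylinder $I(\underbrace{a,\dots,a}_{n})$ the denominators $q_k=q_k([a,\dots,a])$ obey, straight from the recursive formula~(\ref{recursive}), both $q_k\le a(a+1)^{k-1}$ and $q_{k-1}\le q_k/a$, whence
\[
q_n\bigl(q_n+q_{n-1}\bigr)\le\frac{a+1}{a}\,q_n^2\le a(a+1)^{2n-1}=\frac{a}{a+1}\,(a+1)^{2n}\le\frac{a}{a+1}\,e^{n(2b-\varepsilon)}\le e^{n(2b-\varepsilon)}.
\]
Therefore $|I_n(x)|=|I(\underbrace{a,\dots,a}_{n})|\ge e^{-n(2b-\varepsilon)}$ for every $x$ in this cylinder, i.e.\ $I(\underbrace{a,\dots,a}_{n})\subseteq S_n$. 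Peeling the digits off one at a time with Lemma~\ref{Khin57}, precisely as in Theorem~\ref{xiaoyu}, gives $\lambda(S_n)\ge|I(\underbrace{a,\dots,a}_{n})|\ge(3a^2)^{-n}$, so that $\liminf_{n}\tfrac1n\log\lambda(S_n)\ge-2\log b^\ast_{\varepsilon/2}-\log3$.

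The one slightly delicate point is the chain of inequalities for $q_n(q_n+q_{n-1})$ in the liminf part: the bound $|I_n(x)|\ge\tfrac12 q_n^{-2}(x)$ coming straight from the two-sided estimate is, by exactly a factor $2$, a shade too weak to force the constant cylinder into $S_n$, so one must work with the exact length $q_n^{-1}(q_n+q_{n-1})^{-1}$ and use $q_{n-1}\le q_n/a$. (Alternatively, one may note that $b^\ast_{\varepsilon/2}\in\{1,2\}$ since $e^{b}-1<2.28$, and that for $a\in\{1,2\}$ the factor $\tfrac{a+1}{a}$ already exceeds $\sqrt2$, so even the cruder estimate works.) Everything else is routine manipulation of the recursive formula~(\ref{recursive}) and Lemma~\ref{Khin57}.
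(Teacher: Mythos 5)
Your proof is correct. The limsup half is exactly the reduction the paper intends: the paper offers no detailed argument for this application, merely invoking $\tfrac{1}{2}q_n^{-2}(x)\le |I_n(x)|\le q_n^{-2}(x)$ together with Theorem \ref{xiaoyu}, and your inclusion $S_n\subseteq\{\log q_n(x)/n\le b-\varepsilon/2\}$ with $0<\varepsilon/2\le b$ is precisely that. For the liminf half you correctly spotted the point the paper's sketch glosses over: the crude bound $|I_n(x)|\ge \tfrac12 q_n^{-2}(x)$ loses a factor $2$, so one cannot simply quote the liminf of Theorem \ref{xiaoyu} as a black box. Your fix — rerunning the paper's own cylinder construction with constant digit $a=b^\ast_{\varepsilon/2}$, using the exact length $|I_n|=q_n^{-1}(q_n+q_{n-1})^{-1}$ together with $q_n\le a(a+1)^{n-1}$ and $q_{n-1}\le q_n/a$ to get $q_n(q_n+q_{n-1})\le e^{n(2b-\varepsilon)}$, then peeling digits with Lemma \ref{Khin57} to get $\lambda(S_n)\ge(3a^2)^{-n}$ — is sound (the hypothesis $\varepsilon\le 2(b-\log 2)$ indeed guarantees $a\ge 1$, and $|I(a)|=\tfrac{1}{a(a+1)}\ge\tfrac{1}{3a^2}$ closes the induction), and it is essentially the same argument as the paper's liminf proof of Theorem \ref{xiaoyu}, just carried out for $|I_n|$ directly rather than through $q_n$; an alternative patch would be to apply Theorem \ref{xiaoyu} with $\varepsilon$ slightly enlarged and use that $b^\ast_{\cdot}$ is locally constant, but your direct route avoids the edge cases that creates.
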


\begin{theorem}
For any $\varepsilon >0$,
\begin{equation*}
\limsup_{n \to \infty} \frac{1}{n}\log \lambda\left\{x\in [0,1):\frac{1}{n}\log |I_n(x)| + 2b \leq - \varepsilon\right\} \leq \theta_1(\varepsilon/2)
\end{equation*}
 and
\begin{equation*}
\liminf_{n \to \infty} \frac{1}{n}\log \lambda\left\{x\in [0,1):\frac{1}{n}\log |I_n(x)| + 2b \leq -\varepsilon\right\} \geq -2\log b_{\varepsilon/2}-\log 3.
\end{equation*}
\end{theorem}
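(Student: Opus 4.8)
The plan is to transfer both bounds directly from Theorem~\ref{dayu} by means of the elementary two-sided estimate for the cylinder length recorded above,
\[
\frac{1}{2q_n^2(x)} \leq |I_n(x)| \leq \frac{1}{q_n^2(x)},
\]
which rewrites as $-2\log q_n(x) - \log 2 \leq \log|I_n(x)| \leq -2\log q_n(x)$. Consequently the event $\{\frac1n\log|I_n(x)| + 2b \leq -\varepsilon\}$, i.e. $\{|I_n(x)| \leq e^{-n(2b+\varepsilon)}\}$, is sandwiched between the event $\{\frac{\log q_n(x)}{n}\geq b+\varepsilon/2\}$ and the event $\{\frac{\log q_n(x)}{n} \geq b + \varepsilon/2 - \frac{\log 2}{2n}\}$, the discrepancy being an $n$-independent multiplicative constant in $|I_n|$ which becomes negligible after dividing the logarithm by $n$. (As usual $\lambda(\mathbb I)=1$, so it is harmless to restrict to irrationals, as in the earlier proofs.)

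For the limsup bound I would repeat the Markov-inequality argument of Theorem~\ref{dayu}. If $\frac1n\log|I_n(x)| + 2b \leq -\varepsilon$, then using $|I_n(x)| \geq \frac{1}{2q_n^2(x)}$ one gets $q_n(x) \geq 2^{-1/2}e^{n(b+\varepsilon/2)}$, hence $q_n^t(x) \geq 2^{-t/2}e^{nt(b+\varepsilon/2)}$ for every $0<t<1$. Markov's inequality then gives
\[
\lambda\Big\{x\in\mathbb I:\frac1n\log|I_n(x)| + 2b \leq -\varepsilon\Big\} \leq 2^{t/2}\,e^{-nt(b+\varepsilon/2)}\,\mathrm E\big(q_n^t\big);
\]
taking $\frac1n\log$, letting $n\to\infty$ and applying Lemma~\ref{proofs} yields $\limsup \leq -t(b+\varepsilon/2) + \mathrm P(1-t/2)$ for all $0<t<1$, and the infimum over $t\in(0,1)$ is exactly $\theta_1(\varepsilon/2)$.

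For the liminf bound I would invoke the reverse inclusion. Since $|I_n(x)| \leq q_n^{-2}(x)$ and $q_n(x)\geq a_1(x)\cdots a_n(x)$ by the recursion~(\ref{recursive}), the set $\{\frac1n\log|I_n(x)| + 2b \leq -\varepsilon\}$ contains $\{x\in\mathbb I: \frac{\log a_1(x)\cdots a_n(x)}{n}\geq b+\varepsilon/2\}$ and therefore the cylinder $I(\underbrace{b_{\varepsilon/2},\dots,b_{\varepsilon/2}}_{n})$, where $b_{\varepsilon/2}$ is the smallest integer no less than $e^{b+\varepsilon/2}$. Iterating Lemma~\ref{Khin57} exactly as in the liminf part of Theorem~\ref{dayu} gives $\lambda\big(I(b_{\varepsilon/2},\dots,b_{\varepsilon/2})\big) \geq (3b_{\varepsilon/2}^2)^{-n}$, whence $\liminf \frac1n\log\lambda\{\cdots\} \geq -2\log b_{\varepsilon/2} - \log 3$.

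There is no genuine obstacle here: the argument is essentially a verbatim transcription of the proof of Theorem~\ref{dayu} through the dictionary $\log|I_n|\leftrightarrow -2\log q_n$, and the only point requiring (trivial) attention is that the constant factor $2^{\pm1}$ produced by the sandwich for $|I_n(x)|$ contributes a term of order $1/n$ to $\frac1n\log\lambda\{\cdots\}$ and hence drops out of both the $\limsup$ and the $\liminf$. The consistency $-2\log b_{\varepsilon/2} - \log 3 < \theta_1(\varepsilon/2)$ of the two bounds is already contained in the proof of Theorem~\ref{dayu} and in Remark~\ref{1}.
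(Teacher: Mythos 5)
Your proposal is correct and follows essentially the route the paper intends: transferring Theorem~\ref{dayu} through the sandwich $\tfrac{1}{2}q_n^{-2}(x)\leq |I_n(x)|\leq q_n^{-2}(x)$, with the harmless factor $2$ absorbed after dividing $\log$ by $n$ (the paper itself only records this observation rather than writing out the Markov and cylinder steps you reproduce). No gaps.
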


\begin{theorem}\label{Diophantine}
For any $\varepsilon > 0$, there exist the constants $A_3, B_3> 0$ and $\alpha_3, \beta_3 > 0$ (both only depending on $\varepsilon$) such that for all $n \geq 1$, we have
\begin{equation*}
B_3e^{-\beta_3 n} \leq \lambda \left\{x \in [0,1):\left|\frac{1}{n}\log |I_n(x)| + \frac{\pi^2}{6\log2}\right| \geq \varepsilon\right\} \leq A_3e^{-\alpha_3 n}.
\end{equation*}
\end{theorem}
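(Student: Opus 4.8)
The plan is to reduce this statement to the large deviation estimate for $\log q_n(x)/n$ already established in Theorem \ref{Large deviations}, using only the elementary sandwich $\tfrac{1}{2q_n^2(x)} \le |I_n(x)| \le \tfrac{1}{q_n^2(x)}$ recorded just before the statement. Taking logarithms and dividing by $n$ gives, for every $x \in \mathbb{I}$ and every $n \ge 1$,
\[
-\frac{2\log q_n(x)}{n} - \frac{\log 2}{n} \;\le\; \frac{\log |I_n(x)|}{n} \;\le\; -\frac{2\log q_n(x)}{n},
\]
so that, writing $b = \pi^2/(12\log 2)$, the quantity $\tfrac{1}{n}\log|I_n(x)| + 2b$ lies within $\tfrac{\log 2}{n}$ of $-2\big(\tfrac{\log q_n(x)}{n} - b\big)$. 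Thus all deviation information transfers between the two quantities up to an $O(1/n)$ error and a harmless factor $2$.

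For the \textbf{upper bound}, I would argue that if $\big|\tfrac{1}{n}\log|I_n(x)| + 2b\big| \ge \varepsilon$, then the displayed inequality forces $\big|\tfrac{\log q_n(x)}{n} - b\big| \ge \tfrac{\varepsilon}{2} - \tfrac{\log 2}{2n}$. Hence for all $n \ge n_0 := \lceil 2\log 2/\varepsilon\rceil$ one has the set inclusion
\[
\Big\{x \in \mathbb{I} : \big|\tfrac{1}{n}\log|I_n(x)| + 2b\big| \ge \varepsilon\Big\} \subseteq \Big\{x \in \mathbb{I} : \big|\tfrac{\log q_n(x)}{n} - b\big| \ge \tfrac{\varepsilon}{4}\Big\},
\]
and the upper estimate of Theorem \ref{Large deviations}, applied with $\varepsilon/4$ in place of $\varepsilon$, bounds the right-hand measure by $Ae^{-\alpha n}$. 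For the finitely many $n$ with $1 \le n < n_0$ the measure is at most $1$, so taking $A_3 := \max\{A, e^{\alpha n_0}\}$ and $\alpha_3 := \alpha$ gives the bound $A_3 e^{-\alpha_3 n}$ for all $n \ge 1$.

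For the \textbf{lower bound}, since $\tfrac{1}{n}\log|I_n(x)| \le -\tfrac{2\log q_n(x)}{n}$ I would use the chain of inclusions
\[
\Big\{x \in \mathbb{I} : \big|\tfrac{1}{n}\log|I_n(x)| + 2b\big| \ge \varepsilon\Big\} \supseteq \Big\{x \in \mathbb{I} : \tfrac{1}{n}\log|I_n(x)| + 2b \le -\varepsilon\Big\} \supseteq \Big\{x \in \mathbb{I} : \tfrac{\log q_n(x)}{n} \ge b + \tfrac{\varepsilon}{2}\Big\}.
\]
By the construction carried out in the liminf part of the proof of Theorem \ref{dayu} — estimating $\lambda\big(I(\underbrace{b_{\varepsilon/2},\dots,b_{\varepsilon/2}}_{n})\big)$ by iterating Lemma \ref{Khin57} — the last set has Lebesgue measure at least $(3b_{\varepsilon/2}^{2})^{-n}$ for every $n \ge 1$, so $B_3 := 1$ and $\beta_3 := 2\log b_{\varepsilon/2} + \log 3$ work. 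Alternatively, one may run the proof of Theorem \ref{Large deviations} verbatim, splitting the two-sided event into its one-sided pieces and invoking the two theorems of this subsection that immediately precede the present one.

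I expect no serious obstacle: the one genuinely nontrivial ingredient, Lemma \ref{Khin57}, is already in hand, and what remains is the bookkeeping of the $O(1/n)$ correction together with the adjustment of the constants over the finite initial range $1 \le n < n_0$. The only point worth noting is that for $\varepsilon > 2b$ the upper one-sided event $\{\tfrac{1}{n}\log|I_n(x)| + 2b \ge \varepsilon\}$ is empty for every $n$ (since $|I_n(x)| \le 1$), so that no restriction on $\varepsilon$ is actually needed: the upper bound is then immediate from the other one-sided piece, and the lower bound still follows from the inclusion above.
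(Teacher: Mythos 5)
Your proposal is correct and follows essentially the same route as the paper, which obtains this theorem directly from the sandwich $\frac{1}{2q_n^2(x)} \le |I_n(x)| \le \frac{1}{q_n^2(x)}$ together with Theorems \ref{dayu}, \ref{xiaoyu} and \ref{Large deviations}; you simply carry out explicitly the $O(1/n)$ bookkeeping and the adjustment of constants over the finite initial range that the paper leaves implicit. Your explicit lower bound via the cylinder $I(b_{\varepsilon/2},\dots,b_{\varepsilon/2})$ and Lemma \ref{Khin57} is exactly the mechanism behind the liminf parts of the cited theorems, so nothing new is needed.
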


{\bf Acknowledgement}
The work was supported by NSFC 11371148, Guangdong Natural Science Foundation 2014A030313230, and "Fundamental Research Funds for the Central Universities" SCUT 2015ZZ055 and 2015ZZ127.

\end{document}